\theoremstyle{plain}
\newtheorem{Main}{Theorem}
\newtheorem{Thm}{Theorem}%[section]
\newtheorem{Prop}[Thm]{Proposition}
\newtheorem{Lem}[Thm]{Lemma}
\newtheorem{Rem}[Thm]{Remark}
\theoremstyle{remark}
\def\max{\operatorname{max}}
\def\min{\operatorname{min}}
\begin{document}

\title[]
{Dimension approximation for diffeomorphisms preserving hyperbolic SRB measures}

\author{Juan Wang}
\address{School of Mathematics, Physics and Statistics, Shanghai University of Engineering Science, Shanghai 201620, P.R. China}
\email{wangjuanmath@sues.edu.cn}

\author{Congcong Qu\textsuperscript{*}}
\address{Department of mathematics, Soochow University, Suzhou 215006, Jiangsu, P.R. China}
\email{congcongqu@foxmail.com}

\author{Yongluo Cao }
\address{Departament of Mathematics, Shanghai Key Laboratory of PMMP, East China Normal University,
 Shanghai 200062, P.R. China}
\address{Departament of Mathematics, Soochow University,
Suzhou 215006, Jiangsu, P.R. China}
\email{ylcao@suda.edu.cn}

%\date{\today}

\newcommand\blfootnote[1]{%
\begingroup
\renewcommand\thefootnote{}\footnote{#1}%
\addtocounter{footnote}{-1}%
\endgroup
}

\thanks{The first author is partially supported by NSFC (11501400, 11871361) and the Talent Program of Shanghai University of Engineering Science. The third author is partially supported by NSFC(11771317, 11790274) and Science and Technology Commission of Shanghai Municipality (No.18dz2271000).}

\subjclass[2010] {37C45, 37D35, 37D20
 }

\keywords{Hausdorff dimension, hyperbolic measures, hyperbolic sets}

\blfootnote{\textsuperscript{*}Corresponding author}

\begin{abstract}
For a $C^{1+\alpha}$ diffeomorphism $f$ preserving a hyperbolic ergodic SRB measure $\mu$, Katok's remarkable results assert that $\mu$ can be approximated by a sequence of hyperbolic sets $\{\Lambda_n\}_{n\geq1}$. In this paper, we prove the Hausdorff dimension for $\Lambda_n$ on the unstable manifold tends to the dimension of the
unstable manifold. Furthermore, if the stable direction is one dimension, then the Hausdorff dimension of $\mu$ can be approximated by the Hausdorff dimension of $\Lambda_n$.

To establish these results, we utilize the $u$-Gibbs property of the conditional measure of the equilibrium measure of $-\psi^{s}(\cdot,f^n)$  and the properties of the uniformly hyperbolic dynamical systems.
\end{abstract}

\maketitle

%\tableofcontents

\section{Introduction}
%The study of the non-uniformly hyperbolic systems plays an important role in the theory of the dynamical systems. And it is interesting to see this from the view of the dimension theory.

The present paper is motivated by Cao, Pesin and Zhao \cite{caopesinzhao2019} and Climenhaga, Pesin and Zelerowicz \cite{CliPZ}.
Let $f:M\rightarrow M$ be a $C^{1+\alpha}$ diffeomorphism on a $d_{0}$-dimensional  compact Riemannian manifold and $\mu$ be a hyperbolic ergodic SRB $f$-invariant probability measure. $\mu$ is said to be an {\it SRB measure}, if it is absolutely continuous along the unstable leaves. Let $\Gamma$ be the set of points which are regular in the sense of Oseledec \cite{Oseledets}. For  every $x\in\Gamma$, denote its distinct Lyapunov exponents by
\begin{align*}
	\lambda_{1}(\mu)>...>\lambda_{\ell}(\mu)>0>\lambda_{\ell+1}(\mu)>...>\lambda_{k}(\mu)
\end{align*}
with multiplicities $m_{1}, m_{2},...,m_{k}\geq 1$	and let
\begin{align}\label{tangent}
	T_{x}M=E_{1}(x)\oplus E_{2}(x)\oplus...\oplus E_{k}(x)
\end{align}
be the corresponding decomposition of its tangent space, where $0<k\leq d_{0}$. Define
\begin{align*}
	W^{u}_{\text{loc}}(x, f) = \{y\in M:d(f^{-n}(x),f^{-n}(y))\leq \beta \text{~for all~} n\geq 0\},
\end{align*}
where $\beta>0$ is small. It is called the {\it local unstable manifold of $f$ at $x$ with respect to $\mu$}. This is an embedded $C^{1}$ disk with $T_{x}W^{u}_{\text{loc}}(x, f)=E_{1}(x)\oplus...\oplus E_{\ell}(x)$. {\it The unstable manifold of $f$ at $x$ with respect to $\mu$} is given as follows:
\begin{equation*}
	W^{u}(x, f) = \bigcup_{n\geq 0}f^{n}(W^{u}_{\text{loc}}(f^{-n}(x), f)).
\end{equation*}
Similarly we can define {\it the stable manifold of $f$ at $x$ with respect to $\mu$}:
\begin{equation*}
	W^{s}(x, f) = \bigcup_{n\geq 0}f^{-n}(W^{s}_{\text{loc}}(f^{n}(x), f)),
\end{equation*}
where
\begin{align*}
	W^{s}_{\text{loc}}(x, f) = \{y\in M:d(f^{n}(x),f^{n}(y))\leq \beta \text{~for all~} n\geq 0\}
\end{align*}
is the {\it local stable manifold of $f$ at $x$ with respect to $\mu$}.

For a $C^{2}$ diffeomorphism $f$ preserving an ergodic hyperbolic measure with positive entropy, Katok \cite{katok1980} proved there exists a sequence of horseshoes and the topological entropy of $f$ restricted to horseshoes can be arbitrarily close to the measure-theoretic entropy. Avila, Crovisier and Wilkinson \cite{ACW} extended this results and explicitly gave a dominated splitting $TM=E_{1}\oplus_{>}...\oplus_{>} E_{k}$ corresponding to Oseledec subspaces on horseshoes and the approximation of Lyapunov exponents on each sub-bundle $E_{j}$. They used this result to study the density of stable ergodicity. Mendoza  \cite{Mendoza1988} used Katok's results to obtain that the Hausdorff dimensions for horseshoes on the local unstable manifolds converge to one for a $C^{2}$ surface diffeomorphism preserving an ergodic hyperbolic SRB measure.
Cao, Pesin, Zhao \cite{caopesinzhao2019} considered an ergodic invariant measure $\mu$ with positive entropy for $C^{1+\alpha}$ non-conformal repellers,
and constructed a compact expanding invariant set with dominated splitting corresponding to Oseledec splitting of $\mu$, for which entropy and Lyapunov exponents approximate to entropy and Lyapunov exponents for $\mu$. Moreover, they used this construction to give a sharp estimate for the lower bound estimate of Hausdorff dimension of non-conformal repellers.
In this work we exploit Cao, Pesin and Zhao's ideas in an essential way to get the first main result, which generalizes Mendoza's result in \cite{Mendoza1988} for diffeomorphisms in higher dimensional manifold.
%Our first main result use the lower bound estimate tools for non-conformal repellers to generalize Mendoza's result in \cite{Mendoza1988} for  diffeomorphisms in higher dimensional manifold.
S$\acute{\text{a}}$nchez-Salas \cite{S} also proved similar result. His proof  is based on Markov towers that can also be described by horseshoes with infinitely many branches and variable return times. However, due to a serious flaw in the proof  for key proposition 5.1   in that paper, a complete proof for the statement in \cite{S} does not exist so far.   In this paper, utilizing the u-Gibbs property of the conditional measure of the equilibrium measure of $-\psi^{s}(\cdot,f^n)$ (see the definition in (\ref{star2})) and the properties of the uniformly hyperbolic dynamical systems, we can prove the following theorem. 

%For a $C^{1+\alpha}$ diffeomorphism $f$ preserving an ergodic hyperbolic SRB measure, there exist a sequence of hyperbolic sets, and the Hausdorff dimension for hyperbolic sets on the unstable manifold converges to the dimension of the unstable manifold.

%We state our main results of the present paper:

\begin{Main}\label{hyperbolic1}
Suppose $f: M\rightarrow M$ is a $C^{1+\alpha}$ diffeomorphism on a $d_0$-dimensional compact Riemannian manifold $M$ and $\mu$ is a hyperbolic ergodic SRB $f$-invariant
probability measure. Then there is a sequence of hyperbolic sets $\Lambda_{n}$ and a sequence of ergodic measures $\mu_{n}$ supported on $\Lambda_{n}$ such that $\mu_{n}\rightarrow \mu$ in the weak$^{\ast}$ topology and
\begin{align*}
\dim_{H}(\Lambda_{n}\cap W^{u}_{\text{loc}}(x, f))\rightarrow \dim W^{u}(x, f) (\text{as } n \to \infty)
\end{align*}
uniformly for $x\in \Lambda_{n}$.
\end{Main}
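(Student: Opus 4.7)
The plan is to combine Katok's horseshoe approximation (in the refined form of Avila-Crovisier-Wilkinson \cite{ACW} and of Cao-Pesin-Zhao \cite{caopesinzhao2019}) with a non-conformal Bowen equation on the approximating hyperbolic sets, exploiting the $u$-Gibbs property of suitable equilibrium states. First, I would invoke the refined Katok construction to produce hyperbolic sets $\Lambda_{n}$ and ergodic $f$-invariant measures $\mu_{n}$ supported on $\Lambda_{n}$ such that $\mu_{n}\to\mu$ in the weak$^{\ast}$ topology, $h(\mu_{n})\to h_{\mu}(f)$, and $T_{\Lambda_{n}}M$ carries a dominated splitting $E^{u,n}_{1}\oplus_{>}\cdots\oplus_{>}E^{u,n}_{\ell}\oplus_{>}E^{s,n}_{\ell+1}\oplus_{>}\cdots\oplus_{>}E^{s,n}_{k}$ whose sub-bundles approximate the Oseledec subspaces of $\mu$, with the Lyapunov exponents of $\mu_{n}$ on each sub-bundle converging to $\lambda_{i}(\mu)$. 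Because $\mu$ is SRB, Pesin's entropy formula yields $h_{\mu}(f)=\sum_{i=1}^{\ell}m_{i}\lambda_{i}(\mu)$, hence
\begin{align*}
h(\mu_{n})-\sum_{i=1}^{\ell}m_{i}\lambda_{i}(\mu_{n})\longrightarrow 0.
\end{align*}

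Next, I would reduce the dimension computation on the unstable slice to a non-conformal Bowen equation on each $\Lambda_{n}$. Setting $u=m_{1}+\cdots+m_{\ell}=\dim W^{u}(x,f)$ and $E^{u,n}=E^{u,n}_{1}\oplus\cdots\oplus E^{u,n}_{\ell}$, consider the singular-value potential $\psi^{s}(x,f^{n}):=\log\phi^{s}(Df^{n}|_{E^{u,n}(x)})$ for $s\in[0,u]$, where $\phi^{s}$ denotes the standard (Falconer) singular-value function. Uniform hyperbolicity together with the dominated splitting on $\Lambda_{n}$ imply that $s\mapsto P_{\Lambda_{n}}(-\psi^{s}(\cdot,f^{n}))$ is continuous and strictly decreasing on $[0,u]$, so has a unique zero $s_{n}$. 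Applying the variational principle with the measure $\mu_{n}$ and using the displayed identity above, $P_{\Lambda_{n}}(-\psi^{u}(\cdot,f^{n}))\to 0$ from above. Strict monotonicity with slope controlled from below by the smallest positive Lyapunov exponent $\lambda_{\ell}(\mu)$ then forces $s_{n}\to u$.

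To convert this into a lower bound for $\dim_{H}(\Lambda_{n}\cap W^{u}_{\text{loc}}(x,f))$, let $\nu_{n}$ be the equilibrium state of $-\psi^{s_{n}}(\cdot,f^{n})$ on $\Lambda_{n}$. Following the approach of Climenhaga-Pesin-Zelerowicz \cite{CliPZ}, the conditional measures of $\nu_{n}$ on local unstable leaves enjoy a $u$-Gibbs property of the form
\begin{align*}
C^{-1}\phi^{s_{n}}(Df^{n}|_{E^{u,n}(y)})\leq\nu_{n}^{u}(B_{n}^{u}(y))\leq C\phi^{s_{n}}(Df^{n}|_{E^{u,n}(y)})
\end{align*}
on Bowen balls $B_{n}^{u}(y)$ inside leaves. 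A Frostman-type mass-distribution argument on leaves, combined with the dominated splitting, then gives $\dim_{H}(\Lambda_{n}\cap W^{u}_{\text{loc}}(x,f))\geq s_{n}$ uniformly in $x\in\Lambda_{n}$; together with the trivial upper bound $\dim_{H}(\Lambda_{n}\cap W^{u}_{\text{loc}}(x,f))\leq u$ and the convergence $s_{n}\to u$, the theorem follows.

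The principal obstacle is the uniform $u$-Gibbs estimate displayed above: the non-conformality of $E^{u,n}$ and the $n$-dependence of both the singular-value function and the Bowen geometry make the usual construction of $u$-Gibbs conditional densities delicate, and a naive argument would produce a constant $C=C(n)$ blowing up as the spectral gap on $\Lambda_{n}$ deteriorates with $n$. The resolution is to exploit the uniform hyperbolicity of each $\Lambda_{n}$ together with the passage to the iterate $f^{n}$, which is precisely where the singular-value asymmetries become controllable on the scales relevant to the Hausdorff-dimension calculation; this is the technical heart of the argument, and is exactly the place where a correct treatment of the $u$-Gibbs property repairs the flaw in S\'anchez-Salas \cite{S}.
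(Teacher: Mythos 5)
Your proposal follows essentially the same route as the paper: Katok--Avila--Crovisier--Wilkinson approximation by horseshoes carrying a dominated splitting adapted to the Oseledec decomposition, a non-conformal Bowen equation on the unstable slice, and the $u$-Gibbs property of the Climenhaga--Pesin--Zelerowicz equilibrium state to run a mass-distribution argument on local unstable leaves, closed out by the variational principle and Pesin's entropy formula.

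A few design choices and slips are worth flagging. You use the Falconer singular-value potential $\log\phi^s(Df^n|_{E^u})$, whereas the paper's potential $\psi^s(x,f^n)=\sum_{j\le d}m_j\log\|D_xf^n|_{E_j}\|+(s-r_d)\log\|D_xf^n|_{E_{d+1}}\|$ is built from the sub-bundle norms of the dominated splitting; the paper's choice is the more convenient one because, after the bounded-distortion step, the unstable cylinders become honest rectangles whose sides are the exactly multiplicative products $\prod_j\|D_{f^j y}f|_{E_i}\|^{-1}$, which is what the volume count in Lemma~3.1 actually uses, and it sidesteps the sub-multiplicativity defect of $\phi^s$ that would otherwise appear in relating Birkhoff sums of $\psi^{s_n}(\cdot,f^n)$ to $\psi^{s_n}(\cdot,f^{mn})$. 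You also take the iterate equal to the horseshoe index (work with $f^n$ on $\Lambda_n$), whereas the paper keeps the two limits separate: fix $\Lambda_\varepsilon$, pass to powers $f^{2^k}$ to identify the super-additive Bowen root $s^*_\varepsilon$ (Lemma~3.2), and only then let $\varepsilon\to 0$; your diagonal still works but hides where the super-additive pressure enters. Three small errors: the pressure $P_{\Lambda_n}(-\psi^u(\cdot,f^n))$ tends to zero \emph{from below}, not from above (Margulis--Ruelle makes it $\le 0$ on every $\Lambda_n$, and Pesin's formula for the SRB measure $\mu$ forces the limit to be $0$; this is exactly why the slope bound by $\lambda_\ell(\mu)$ is needed); the displayed $u$-Gibbs estimate should read $\phi^{s_n}(Df^{mn}|_{E^u})^{-1}$, i.e.\ with the reciprocal and at full Bowen depth, not $\phi^{s_n}(Df^n)$; and the worry that the Gibbs constant must be uniform in $n$ is a red herring, since for each $n$ a finite $C(n)$ already yields $\dim_H(\Lambda_n\cap W^u_{\text{loc}}(x,f))\ge s_n$, and only the roots $s_n\to u$ need to be controlled.
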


S$\acute{\text{a}}$nchez-Salas \cite{S2} considered the expanding measure for a $C^{2}$ interval map with finitely many non-degenerate critical points, and
proved that the Hausdorff dimension of the expanding measure can be approximated by the Hausdorff dimension of expanding Cantor sets.
His method is relying on the geometrical properties of the base of Markov towers, and Ledrappier-Young characterization of the expanding measure satisfying the Rokhlin-Pesin entropy
formula.
%He only considered the interval maps.
%In a setting of general maps with singular or critical points, Qu and Cao \cite{QC} have the same conclusion.
%In ~\cite{QC}, Qu and Cao show the dimensional approximation of the expanding measures for maps with singular or critical points.

Barreira, Pesin and Schmeling \cite{BPS} showed that the pointwise dimension of a hyperbolic measure for $C^{1+\alpha}$ diffeomorphism is almost everywhere the sum of the pointwise dimensions along stable
and unstable local manifolds.
Our second motivation is to  apply Barreira, Pesin and Schmeling's result and Theorem \ref{hyperbolic1} to extend S$\acute{\text{a}}$nchez-Salas's result \cite{S2} to  hyperbolic SRB measures for diffeomorphism in higher dimensional. The investigation of dimension for hyperbolic set is fairly well understood for two dimensional systems, which  is essentially a conformal setting. However, extending the theory to higher-dimensional and genuinely nonconformal situations is well known to be difficult.  On the other hand, the stable/unstable holonomies for hyperbolic surface diffeomorphisms are always bi-Lipschitz. But in a higher-dimensional setting in general stable/unstable holonomies are not bi-Lipschitz but only H$\ddot{\text{o}}$lder continuous.

%Theorem \ref{Main B} as follow.
%Theorem \ref{Main B} is the dimension approximation for a diffeomorphism $f$ on a $d_0$-dimensional compact Riemannian manifold $M$, preserving a hyperbolic SRB measure $\mu$, which has one dimensional stable manifold for every $x\in\Gamma$.

Let $f: M\to M$ be a $C^{1+\alpha}$ diffeomorphism of a compact Riemannian surface, and $\mu$ be a hyperbolic ergodic $f$-invariant probability measure. Young \cite{Y} proved
\begin{equation}\label{Young1}
\text{dim}_H\mu = \frac{h_\mu(f)}{\lambda_1(\mu)}-\frac{h_\mu(f)}{\lambda_2(\mu)},
\end{equation}
where $h_\mu(f)$ is the Kolmogorov-Sinai entropy of $f$ with respect to $\mu$, and $\lambda_1(\mu)$ and $\lambda_2(\mu)$ are respectively the positive and negative Lyapunov exponents.
A measurable partition $\xi^u$ ($\xi^s$) of $M$ is said to be {\it subordinate to the unstable (stable) manifold} if for $\mu$-a.e.~$x$, $\xi^u(x)\subset W^u(x,f)$ ($\xi^s(x)\subset W^s(x,f)$) and contains an open neighborhood of $x$ in $W^u(x,f)$ ($W^s(x,f)$). Let $\{\mu_x^u\}$  and $\{\mu_x^s\}$ be systems of conditional measures associated with $\xi^u$ and $\xi^s$ respectively.
For $x\in\Gamma$, define
$$d^u(x)=\lim_{r\to0}\frac{\log\mu_x^u(B^u(x, r))}{\log r} \text{ and } d^s(x)=\lim_{r\to0}\frac{\log\mu_x^s(B^s(x, r))}{\log r},$$
which are well defined (see \cite{LY}). Here $B^*(x, r)=\{y\in W^*(x,f): d_*(x,y)<r\}$ with $*\in\{u, s\}$, and $d_u$ ($d_s$) is the metric induced by the Riemannian structure on the unstable (stable) manifold $W^u$ ($W^s$).
Ledrappier and Young \cite{LY} proved for $\mu$ almost every $x\in M$, $d^*(x)$ is constant, and
we denote the constant by $\text{dim}_H^*\mu$, where $*\in\{u,s\}$. They also established
\begin{equation*}
\text{dim}_H^u\mu=\frac{h_\mu(f)}{\lambda_1(\mu)} \text{ and } \text{dim}_H^s\mu=-\frac{h_\mu(f)}{\lambda_2(\mu)}.
\end{equation*}
For each $\varepsilon>0$, Katok \cite{katok1980} proved there is a hyperbolic set $\Lambda_\varepsilon$ satisfying
\begin{equation}\label{Katok1}
|h_{\text{top}}(f|_{\Lambda_\varepsilon})-h_\mu(f)|<\varepsilon \text{ and } |\lambda_i(\mu)-\lambda_i(\nu)|<\varepsilon
\end{equation}
for each $\nu\in\mathcal{M}_{inv}(f|_{\Lambda_\varepsilon})$ and $i=1,2$, where $\mathcal{M}_{inv}(f|_{\Lambda_\varepsilon})$ denotes the set of all the $f$-invariant Borel probability measures on $\Lambda_\varepsilon$.
For every $x\in\Lambda_\varepsilon$, in \cite{MM1983} McCluskey and Manning proved that
\begin{equation}\label{usdimension}
\text{dim}_H(\Lambda_\varepsilon\cap W^u(x, f))=t^u \text{ and } \text{dim}_H(\Lambda_\varepsilon\cap W^s(x, f))=t^s
\end{equation}
where $t^u$ and $t^s$ are the roots of
$P(f|_{\Lambda_\varepsilon}, -t\log\|Df|_{E^u}\|)=0$, $P(f|_{\Lambda_\varepsilon}, t\log\|Df|_{E^s}\|)=0$
respectively (here $P(\cdot)$ denotes the topological pressure). Combining with (\ref{Katok1}), one has
$\text{dim}_H(\Lambda_\varepsilon\cap W^u(x, f))\to \frac{h_\mu(f)}{\lambda_1(\mu)}$ and $\text{dim}_H(\Lambda_\varepsilon\cap W^s(x, f))\to -\frac{h_\mu(f)}{\lambda_2(\mu)}$ as $\varepsilon\to 0$.
Since $\text{dim}E^u = \text{dim}E^s =1$, the local product structure is a Lipschitz homeomorphism with Lipschitz inverse.
Therefore
\begin{equation}\label{dimension of hyperbolic set}
\text{dim}_H\Lambda_\varepsilon= t^u+t^s.
\end{equation}
Then it follows from (\ref{Young1}) and (\ref{usdimension}) that
\begin{equation}\label{approximation}
\text{dim}_H\Lambda_\varepsilon \to \text{dim}_H\mu\ (\text{as } \varepsilon\to0).
\end{equation}
The question is whether the Hausdorff dimension for hyperbolic sets $\Lambda_\varepsilon$ can approximate  the Hausdorff dimension for the SRB measure $\mu$ in higher dimension ? The following theorem extends (\ref{approximation}) to the case of higher dimension. Here we assume the hyperbolic measure $\mu$ is an SRB measure and has one dimensional stable manifold for every $x\in\Gamma$.

\begin{Main}\label{Main B}
Let $f: M\rightarrow M$ be a $C^{1+\alpha}$ diffeomorphism on a $d_0$-dimensional compact Riemannian manifold $M$ and $\mu$ be a hyperbolic ergodic SRB $f$-invariant
probability measure. Assume $\mu$ has one dimensional stable manifold (i.e. $\dim E_{l+1}(x)=1$ for every $x\in\Gamma$ and $k=l+1$ in (\ref{tangent}) ). Then there exists a sequence of hyperbolic sets $\Lambda_{n}$ such that
\begin{align*}
	\dim_{H}\Lambda_{n}\rightarrow \dim_{H}\mu~(\text{as } n\to\infty).
\end{align*}
\end{Main}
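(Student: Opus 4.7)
The plan is to combine Main Theorem~\ref{hyperbolic1} on the unstable side with a Bowen-equation analysis on the one-dimensional stable side and the Barreira--Pesin--Schmeling product formula for the measure-theoretic dimension. First, applying Main Theorem~\ref{hyperbolic1} yields a sequence of hyperbolic sets $\Lambda_n$ and ergodic measures $\mu_n\to\mu$ with $\dim_H(\Lambda_n\cap W^u_{\text{loc}}(x,f))\to\dim W^u(x,f)$ uniformly on $\Lambda_n$. The underlying Katok-type construction also provides $h_{\text{top}}(f|_{\Lambda_n})\to h_\mu(f)$ and that the Lyapunov exponents of every $\nu\in\mathcal{M}_{\text{inv}}(f|_{\Lambda_n})$ converge uniformly to those of $\mu$; in particular the stable exponent converges to $\lambda_{l+1}(\mu)<0$.

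Since $\mu$ is SRB, absolute continuity of its unstable conditionals gives $\dim_H^u\mu=\dim W^u(x,f)$, and since $\dim E_{l+1}=1$ the Ledrappier--Young formulas yield $\dim_H^s\mu=-h_\mu(f)/\lambda_{l+1}(\mu)$. By the Barreira--Pesin--Schmeling product formula for hyperbolic measures,
\begin{equation*}
\dim_H\mu=\dim W^u(x,f)-\frac{h_\mu(f)}{\lambda_{l+1}(\mu)}.
\end{equation*}
For $\Lambda_n$, since the stable bundle is one-dimensional the restriction of $f$ to the stable direction is conformal, so the McCluskey--Manning version of Bowen's equation applies: for every $x\in\Lambda_n$, $\dim_H(\Lambda_n\cap W^s_{\text{loc}}(x,f))=t_n^s$, where $t_n^s$ is the unique root of $P(f|_{\Lambda_n},\,t\log\|Df|_{E^s}\|)=0$. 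Using the variational principle with $\mu_n$ as a test measure for one inequality, and the equilibrium state of $t_n^s\log\|Df|_{E^s}\|$ (whose entropy is bounded by $h_{\text{top}}(f|_{\Lambda_n})$ and whose stable exponent is forced close to $\lambda_{l+1}(\mu)$) for the other, together with the convergence statements above, yields $t_n^s\to -h_\mu(f)/\lambda_{l+1}(\mu)$.

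The remaining, most delicate step is to prove
\begin{equation*}
\dim_H\Lambda_n=\dim_H(\Lambda_n\cap W^u_{\text{loc}}(x,f))+\dim_H(\Lambda_n\cap W^s_{\text{loc}}(x,f)).
\end{equation*}
In the surface case this is immediate from a bi-Lipschitz local product parameterization. Here the unstable direction may be high-dimensional and genuinely non-conformal, so in principle the local product map is only H\"older and additivity can fail. The hypothesis $\dim E_{l+1}(x)=1$ is used precisely at this point: stable holonomies between local unstable manifolds of $\Lambda_n$ are parameterized by a one-dimensional time along each stable leaf, which forces them to be Lipschitz with Lipschitz inverse, and hence the local product parameterization of $\Lambda_n$ is bi-Lipschitz. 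Since the one-dimensional stable slice is a Bowen--Moran type Cantor set on which Hausdorff and upper box dimensions coincide, the product formula above goes through. Letting $n\to\infty$ and using the previous two steps then gives $\dim_H\Lambda_n\to\dim_H\mu$.

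The main obstacle in this strategy is establishing this bi-Lipschitz product structure on $\Lambda_n$ in the higher-dimensional, non-conformal regime explicitly flagged by the authors. The two standing hypotheses are both essential here: SRB-ness of $\mu$ is needed to extract full unstable dimension from Theorem~\ref{hyperbolic1} (via the $u$-Gibbs property) and to apply Barreira--Pesin--Schmeling, while one-dimensionality of $E^s$ simultaneously reduces the stable-side analysis to a one-dimensional conformal Bowen equation and secures Lipschitz stable holonomy, bypassing the general-position failure of bi-Lipschitz local product structure in higher-dimensional non-conformal hyperbolic sets.
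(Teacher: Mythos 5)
Your high-level outline is close to the paper on the stable side (Bowen's equation for $t_n^s$ and the limit $t_n^s\to -h_\mu(f)/\lambda_{l+1}(\mu)$) and on the measure side (Barreira--Pesin--Schmeling plus Ledrappier--Young plus the SRB fact $\dim_H^u\mu=u$). However, the step you yourself flag as ``most delicate'' contains a genuine gap that the paper goes out of its way to avoid.

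Your proposal hinges on the exact additivity
\begin{equation*}
\dim_H\Lambda_n=\dim_H(\Lambda_n\cap W^u_{\text{loc}}(x,f))+\dim_H(\Lambda_n\cap W^s_{\text{loc}}(x,f)),
\end{equation*}
which you justify by asserting that $\dim E^s=1$ forces the stable holonomies (and hence the local product map) to be bi-Lipschitz. This is not correct. The stable holonomy moves points between pieces of \emph{unstable} manifolds; its regularity is controlled by a bunching condition comparing the stable contraction rate to the spread of expansion rates in $E^u$, not by the dimension of $E^s$. When $E^u$ is higher-dimensional and non-conformal --- exactly the regime here, since $\Lambda_n$ carries a nontrivial dominated splitting $E_1\oplus\cdots\oplus E_\ell$ --- the stable holonomy and the local product map are in general only H\"older. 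Indeed the paper explicitly remarks before Theorem~\ref{Main B} that ``the local product structure is lack of enough regularity, which may be not a Lipschitz homeomorphism with Lipschitz inverse. Therefore the formula (\ref{dimension of hyperbolic set}) couldn't hold,'' and states that Theorem~\ref{Main B} ``couldn't be deduced directly as in the surface case.'' So the identity you need is precisely the one the authors point out may fail, and your appeal to one-dimensionality of the stable bundle does not rescue it.

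The paper's route sidesteps additivity entirely. Rather than compute $\dim_H\Lambda_\varepsilon$ exactly, it constructs a product reference measure $\hat\mu_n(B(x,r))=\mu_{n,x}^u(B^u(x,r))\cdot\mu_{n,x}^s(B^s(x,r))$, where $\mu_{n,x}^u$ and $\mu_{n,x}^s$ are the conditional measures of the equilibrium states for $-\psi^{t_n}(\cdot,f^{2^n})$ and $\phi^{t_n'}(\cdot,f^{2^n})$. The only geometric input needed is the elementary sandwich $B^u(x,\gamma_2 r)\times B^s(x,\gamma_2 r)\subseteq B(x,r)\subseteq B^u(x,\gamma_1 r)\times B^s(x,\gamma_1 r)$, which follows from uniform transversality of $E^u$ and $E^s$ on a hyperbolic set and requires no Lipschitz holonomy. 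Combining this with the $u$-Gibbs pointwise-dimension estimates $t_n\le\underline d_{\mu_{n,x}^u}(x)\le\overline d_{\mu_{n,x}^u}(x)\le u$ and $d_{\mu_{n,x}^s}(x)=t_n'$ (Lemmas~\ref{lower bound 1} and \ref{upper bound 1}), one gets a two-sided but non-matching estimate $t_n+t_n'-2\varepsilon\le\dim_H\Lambda_\varepsilon\le u+t_n'+2\varepsilon$ via Proposition~\ref{ehdms}. The discrepancy between the two bounds is $u-t_n+4\varepsilon$, and it is exactly the SRB hypothesis --- via Pesin's entropy formula and the computation (\ref{lowerbofroot}) --- that makes $t_n\to u$, so the bounds pinch together in the limit. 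This is the structural reason the theorem is stated for SRB measures: without SRB-ness, $\lim t_n$ could be strictly less than $u$, the sandwich would not close, and no additivity is available to fall back on. To repair your argument you should abandon the bi-Lipschitz product structure claim and adopt this product-measure sandwich argument instead.
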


%\begin{Rem}
 %To avoid this problem, we construct measures on hyperbolic sets and use this measure to measure a ball so as to get a suitable lower and upper bound of the Hausdorff dimension of the hyperbolic set. This is inspired by the work in~\cite{BV}.
%\end{Rem}

%\subsection*{Related results}
%To the knowledge of the authors, there are only a few existing results related to the question of dimensional approximation. We recall these results in this part. Campos and Gelfert established the dimensional approximation of the expanding measure for $C^{1+\alpha}$ conformal maps in \cite{CG1} and hyperbolic measure for $C^{1+\alpha}$ surface diffeomorphisms in \cite{CG2}.   In \cite{PS}, Peres and Shmerkin established the dimensional approximation for self similar attractors satisfying the open set condition.  B$\acute{\text{a}}$r$\acute{\text{a}}$ny, Rams and Simon in \cite{BRS} proved that for the attractor generated by a contracting diagonal affine iterated function system on the plane, if the projection of the attractor on the line satisfying the Hochman condition, then one can find attractors generated by homogeneous affine iterated function systems satisfying the strong seperation condition to approximate the original attractor in the dimensional sense.
%This paper is organized as follows. In Section 2, we recall some definitions and some preliminary results. In Section 3, we give the proof of our main results.

%\subsection*{Organization of this paper}

%Now we give a short discussion about the main techniques we used in Theorem \ref{Main B}.
For the second result, we mention that the local product structure is lack of enough regularity, which may be not a Lipschitz homeomorphism with Lipschitz inverse. Therefore the formula (\ref{dimension of hyperbolic set}) couldn't hold.
%the Hausdorff dimension of the hyperbolic set couldn't be written as the sum of the Hausdorff dimension of the hyperbolic set restricted on the local stable and unstable leaves.
So our second result couldn't be deduced directly as in the surface case.
For the hyperbolic ergodic SRB measure $\mu$ in Theorem \ref{Main B}, Katok's horseshoe construction tells us that, there exists a sequence of horseshoes $\{\Lambda_\varepsilon\}_{\varepsilon>0}$, and the topology entropy of $f$ restricted to horseshoes can be arbitrarily close to the measure theoretic entropy. Moreover each horseshoe $\Lambda_\varepsilon$ has a dominated splitting, with approximately the same Lyapunov exponents on $\Lambda_\varepsilon$.
We define super-additive potentials $\{-\psi^t(\cdot, f^{2^n})\}_{n\geq1}$ (see the definition in (\ref{star2})) in the unstable direction.
Let $t_n$ be the root of Bowen's equation $P(f^{2^n}|_{\Lambda_\varepsilon}, -\psi^t(\cdot, f^{2^n}))=0$, and $\mu_n^u$ be the unique equilibrium state for $P(f^{2^n}|_{\Lambda_\varepsilon}, -\psi^{t_n}(\cdot, f^{2^n}))$.
It follows from Climenhaga, Pesin and Zelerowicz's result \cite{CliPZ} that the family of conditional measures $\{\mu_{n,x}^u\}_{x\in\Lambda_\varepsilon}$ of $\mu_n^u$ on the local unstable leaves has the $u$-Gibbs property for $-\psi^{t_n}(\cdot, f^{2^n})$.
We use some techniques in Theorem \ref{hyperbolic1} to estimate the lower/upper pointwise dimension of $\mu_{n,x}^u$ for every $x$ in $\Lambda_\varepsilon$. That is
$$t_n\leq \underline{d}_{\mu_{n,x}^u}(x)\leq \overline{d}_{\mu_{n,x}^u}(x)\leq u, \text{ for every } x\in\Lambda_\varepsilon$$
and $\displaystyle\lim_{\varepsilon\to0}\lim_{n\to\infty}t_n=u$, where $u$ is the dimension of the unstable manifold.
Similarly we can also define $\{\phi^t(\cdot, f^{2^n})\}_{n\geq1}$ in the stable direction.
Let $t_n'$ be the root of Bowen's equation $P(f^{2^n}|_{\Lambda_\varepsilon}, \phi^t(\cdot, f^{2^n}))=0$, and $\mu_n^s$ be the unique equilibrium state for $P(f^{2^n}|_{\Lambda_\varepsilon}, \phi^{t_n'}(\cdot, f^{2^n}))$.
Since the stable direction is one dimension, then we get $d_{\mu_{n,x}^s}(x)=t_n'$ for every $x\in\Lambda_\varepsilon$, and $\displaystyle\lim_{\varepsilon\to0}\lim_{n\to\infty}t_n'=\frac{h_\mu(f)}{-\lambda_{\ell+1}(\mu)}$, where $\{\mu_{n,x}^s\}_{x\in\Lambda_\varepsilon}$ is the family of conditional measures of $\mu_n^s$ on the local stable leaves, and $\lambda_{\ell+1}(\mu)$ is the unique negative Lyapunov exponent of $\mu$.
Secondly we construct measures $\hat{\mu}_n=\mu_{n,x}^u\times \mu_{n,x}^s$ on hyperbolic set $\Lambda_\varepsilon$ and use this measure to measure a ball so as to get a suitable lower and upper bound of the Hausdorff dimension of the horseshoe $\Lambda_\varepsilon$. This is inspired by the work in~\cite{BV}. Therefore we have $\displaystyle\lim_{\varepsilon\to0}\dim_H\Lambda_\varepsilon=u+\frac{h_\mu(f)}{-\lambda_{\ell+1}(\mu)}$.
On the other hand, Barreira, Pesin and Schmeling \cite{BPS} proved $\dim_H\mu=\dim_H^u\mu+\dim_H^s\mu$, here $\dim_H^u\mu$ and $\dim_H^s\mu$ are the pointwise dimensions along stable
and unstable local manifolds respectively for $\mu$ almost everywhere $x$.
And Ledrappier and Young presented $\dim_H^s\mu=\frac{h_\mu(f)}{-\lambda_{\ell+1}(\mu)}$, which is a well known result for dimension theory. Finally we use the properties of SRB measures and Hausdorff dimension to prove $\dim_H^u\mu=u$. This shows that the Hausdorff dimension of $\mu$ can be approximated by the Hausdorff dimension of horseshoes.
%existence of induced Markov transformations with integrable return time for an endomorphism f preserving an absolutely continuous measure with positive entropy.

We arrange the paper as follows. In the preliminaries, we give some basic notions and properties about Hausdorff dimension, hyperbolic set, Markov partition, Gibbs measure, pointwise dimension, topological pressure and so on. The proof of our main results will appear in Section $3$.

\section{Preliminaries}
%For an introduction of the Hausdorff dimension, entropy and topological pressure, one can refer to Pesin's book \cite{Pesin}.

In this section, we recall some definitions and some preliminary results.
\subsection{Hausdorff dimension}
First we recall the definition of the Hausdorff dimension. For more introduction, one can refer to \cite{Pesin}.

Let $X$ be a compact metric space. Given  a subset $Z$ of $X$, for $s\geq 0$ and $\delta>0$, define
\[
\mathcal{H}_{\delta}^{s}(Z)\triangleq\inf \left\{\sum_{i}|U_i|^s: \
Z\subset \bigcup_{i}U_i,~|U_i|\leq \delta\right\}
\]
where $|\cdot|$ denotes the diameter of a set. The quantity
$\mathcal{H}^{s}(Z)\triangleq\lim\limits_{\delta\rightarrow 0}\mathcal{H}_{\delta}^{s}(Z)$ is called the {\em $s$-dimensional Hausdorff measure} of $Z$. Define the {\em Hausdorff dimension} of $Z$, denoted by $\dim_H  Z$, as follows:
\[
\dim_H  Z =\inf \{s:\ \mathcal{H}^{s}(Z)=0\}=\sup \{s: \mathcal{H}^{s}(Z)=\infty\}.
\]
One can check that the Hausdorff dimension satisfies the monotonicity, that is, for $Y_{1}\subset Y_{2}\subset X$, we have ~$\dim_{H}Y_{1}\leq \dim_{H}Y_{2}$.

The following two propositions are basic tools in dimension theory. One can refer to ~\cite{Pesin}.
\begin{Prop}\label{dimension of measure_upper}
	Assume that there are numbers $C>0,~d>0$ and a Borel finite measure $\mu$ on a measurable set $Z$ such that for every $x\in Z$ and $r>0$, it holds that
	$$\mu(B(x,r))\geq Cr^{d},$$
	then $dim_{H}Z\leq d$.
\end{Prop}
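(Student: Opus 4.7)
The plan is to translate the hypothesized mass lower bound into a direct upper bound on the Hausdorff $s$-measure of $Z$ for every $s > d$, by building an efficient cover from a maximal separated set. First, for each small $\delta>0$ I would choose a maximal $\delta$-separated subset $\{x_1,\dots,x_N\}\subset Z$. By maximality of the separated set, the balls $B(x_i,\delta)$ cover $Z$, while the half-radius balls $B(x_i,\delta/2)$ are pairwise disjoint.

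The key step is to use the hypothesis $\mu(B(x_i,\delta/2))\geq C(\delta/2)^d$, applied at each $x_i\in Z$, to bound $N$. Since the half-radius balls are disjoint and $\mu$ is finite,
$$N\cdot C(\delta/2)^d \;\leq\; \sum_{i=1}^{N}\mu(B(x_i,\delta/2)) \;\leq\; \mu(Z) < \infty,$$
so $N \leq M\delta^{-d}$ with $M = \mu(Z)\,2^{d}/C$. The balls $B(x_i,\delta)$ then form a cover of $Z$ by sets of diameter at most $2\delta$, so for every $s>d$,
$$\mathcal{H}^{s}_{2\delta}(Z) \;\leq\; N\cdot (2\delta)^{s} \;\leq\; 2^{s}M\,\delta^{s-d},$$
and the right-hand side tends to $0$ as $\delta\to 0$. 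Hence $\mathcal{H}^{s}(Z)=0$ for every $s>d$, which yields $\dim_H Z \leq d$.

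There is essentially no serious obstacle; the only conceptual point worth flagging is that one must exploit the hypothesis in the opposite direction from the classical mass distribution principle. That principle converts an \emph{upper} bound on $\mu$-mass of small balls into a \emph{lower} bound on $\dim_H Z$; here, dually, a \emph{lower} bound on $\mu$-mass forces each disjoint ball centred in $Z$ to consume a definite portion of the total finite mass, which caps the number of disjoint balls of given radius and therefore caps $\dim_H Z$ from above. The finiteness assumption on $\mu$ is essential for this packing-type count, and the choice of a \emph{maximal} separated set is what simultaneously yields both the disjoint subfamily (for the mass estimate) and a cover of $Z$ (for the Hausdorff sum).
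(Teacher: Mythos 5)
Your proof is correct, and it is the standard packing argument: the paper itself states this proposition without proof, citing Pesin's book, and the argument you give (maximal $\delta$-separated set, disjoint half-balls to cap $N$ via the mass lower bound, then the doubled balls as a cover to bound $\mathcal{H}^s_{2\delta}$) is exactly the textbook route. The only implicit point worth noting is that $\mu(B(x,r))$ should be read as $\mu(B(x,r)\cap Z)$ since $\mu$ lives on $Z$, which is what makes $\sum_i \mu(B(x_i,\delta/2)) \leq \mu(Z)$ valid; with that understanding the proof is complete.
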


\begin{Prop}\label{dimension of measure_lower}
	Assume that there are numbers $C>0,~d>0$ and a Borel finite measure $\mu$ on a measurable set $Z$ such that for $\mu-$almost every $x\in Z$ and $r>0$, it holds that
	$$\mu(B(x,r))\leq Cr^{d},$$
	then $dim_{H}Z\geq d$.
\end{Prop}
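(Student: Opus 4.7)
The plan is to invoke the standard mass distribution principle argument. First I would pass from the $\mu$-almost-everywhere hypothesis to a pointwise hypothesis on a full-measure subset: let
\[
\tilde Z = \{x \in Z : \mu(B(x,r)) \leq Cr^d \text{ for every } r>0\}.
\]
By assumption $\mu(Z\setminus \tilde Z)=0$, so $\mu(\tilde Z)=\mu(Z)$, and since a finite measure supported on a set of Hausdorff dimension $<d$ would force the cover-based bound below to fail, the interesting case is $\mu(Z)>0$; otherwise the statement is vacuous and can be handled by a trivial remark.

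Next, fix $\delta>0$ and let $\{U_i\}$ be any countable cover of $\tilde Z$ with $|U_i|\leq\delta$. Discard every $U_i$ disjoint from $\tilde Z$; for each remaining piece pick a point $x_i\in U_i\cap \tilde Z$. Then $U_i\subset B(x_i,|U_i|)$, so by the definition of $\tilde Z$,
\[
\mu(U_i)\leq \mu(B(x_i,|U_i|))\leq C|U_i|^d.
\]
Summing gives
\[
\mu(\tilde Z)\leq \sum_i \mu(U_i)\leq C\sum_i|U_i|^d,
\]
hence $\sum_i|U_i|^d\geq \mu(\tilde Z)/C$ for every admissible cover.

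Taking the infimum over covers yields $\mathcal{H}^d_\delta(\tilde Z)\geq \mu(\tilde Z)/C$, and letting $\delta\to 0$ gives $\mathcal{H}^d(\tilde Z)\geq \mu(\tilde Z)/C>0$. By the definition of Hausdorff dimension this forces $\dim_H \tilde Z\geq d$, and monotonicity of $\dim_H$ (stated just before the proposition) gives $\dim_H Z\geq \dim_H \tilde Z\geq d$.

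The only subtle step is the first one: we are given the density bound only $\mu$-almost everywhere on $Z$, but when we bound $\mu(U_i)$ we need a point in $U_i$ at which the bound actually holds. Restricting to the full-measure subset $\tilde Z$ and covering that subset (rather than $Z$) resolves this. Everything else is mechanical.
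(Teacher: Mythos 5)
The paper does not prove Proposition \ref{dimension of measure_lower}; it is stated as a basic tool with a pointer to Pesin's book, and your argument is exactly the standard mass distribution principle proof found there, so the approach matches the intended one. The proof is correct, with two small caveats worth flagging. First, your parenthetical about the case $\mu(Z)=0$ being ``vacuous'' is not quite right: if $\mu(Z)=0$ the conclusion $\dim_H Z\geq d$ simply fails (take $Z$ a point and $\mu=0$), so the proposition as stated in the paper implicitly requires $\mu(Z)>0$; the cleanest fix is to say that $\mu(Z)>0$ is a standing assumption in the mass distribution principle, not that the other case is vacuous. Second, the inclusion $U_i\subset B(x_i,|U_i|)$ is not literally true for an open ball when some point of $U_i$ realizes the diameter from $x_i$; one should write $U_i\subset B(x_i,2|U_i|)$ (or use closed balls), which only replaces $C$ by $2^dC$ and does not affect the conclusion. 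Everything else is exactly right: restrict to the full-measure set $\tilde Z$ where the pointwise bound holds, cover $\tilde Z$, push the bound through the cover to get $\mathcal{H}^d(\tilde Z)\geq \mu(\tilde Z)/(2^dC)>0$, and conclude by monotonicity of Hausdorff dimension.
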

This proposition is known as the {\it mass distribution principle}.

Given a Borel probability measure $\mu$ on $X$, {\em the Hausdorff dimension of the measure $\mu$} is defined as
$$\dim_{H}\mu=\inf\{\dim_{H}Y:Y\subset X ,~\mu(Y)=1\}.$$
{\em The lower and upper pointwise dimension of $\mu$ at the point $x\in X$} are defined by
\begin{equation*}
\underline{d}_\mu(x)=\liminf_{r\to 0}\frac{\log\mu(B(x,r))}{\log r} \text{ and } \overline{d}_\mu(x)=\limsup_{r\to 0}\frac{\log\mu(B(x,r))}{\log r}
\end{equation*}
where $B(x,r)$ denotes the ball of radius $r$ centered at $x$. The quantity  $\dim_H\mu$ can indeed be defined in terms of a local quantity. Namely \cite{BW2006}
\begin{equation}\label{adofhdm}
\dim_H\mu=\text{ess sup}\{\underline{d}_\mu(x): x\in X\},
\end{equation}
here the essential supremum is taken with respect to $\mu$. In particular, if there exists a number $d$ such that
$$\lim_{r\to0}\frac{\log\mu(B(x,r))}{\log r}=d$$
for $\mu$ almost every $x\in X$, then $\dim_H\mu=d$. This criterion was established by Young in \cite{Y}. The limit when it exists is called {\em the pointwise dimension of $\mu$ at $x$}. We also recall the following statement, which relates the Hausdorff dimension with the lower pointwise dimension $\underline{d}_\mu$.
\begin{Prop}\label{ehdms}
The following properties hold:
\begin{itemize}
  \item [ (1) ] if $\underline{d}_\mu(x)\geq\alpha$ for $\mu$-almost every $x\in X$, then $\dim_H\mu\geq\alpha$;
  \item [ (2) ] if $\underline{d}_\mu(x)\leq\alpha$ for every $x\in Z\subseteq X$, then $\dim_HZ\leq\alpha$.
\end{itemize}
\end{Prop}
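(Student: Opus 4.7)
My plan is to derive both parts directly from the mass distribution principle (Proposition~\ref{dimension of measure_lower}) and a Besicovitch-type covering argument, together with a standard exhaustion by level sets on which the local dimension estimate holds uniformly.

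For part (1), fix any $\beta<\alpha$. The hypothesis $\underline d_\mu(x)\ge\alpha$ at $\mu$-a.e.\ $x$ implies that for almost every $x$ there is an $r(x)>0$ with $\mu(B(x,r))\le r^\beta$ whenever $0<r<r(x)$. Setting
\[
Z_n=\bigl\{x\in X:\mu(B(x,r))\le r^\beta\text{ for all }0<r<1/n\bigr\},
\]
we obtain an increasing sequence of Borel sets with $\mu\bigl(\bigcup_n Z_n\bigr)=1$, so $\mu(Z_n)\to 1$. Given any Borel $Y\subseteq X$ with $\mu(Y)=1$, choose $n$ so large that $\mu(Y\cap Z_n)>0$. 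Restricting $\mu$ to $Y\cap Z_n$, every point of this set satisfies the uniform inequality $\mu(B(x,r))\le r^\beta$ for all $r<1/n$, so Proposition~\ref{dimension of measure_lower} yields $\dim_H(Y\cap Z_n)\ge\beta$, and by monotonicity $\dim_H Y\ge\beta$. Taking the infimum over such $Y$ and letting $\beta\nearrow\alpha$ gives $\dim_H\mu\ge\alpha$.

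For part (2), fix any $\beta>\alpha$. For every $x\in Z$ the hypothesis $\underline d_\mu(x)\le\alpha<\beta$ provides a sequence $r_n(x)\searrow0$ with
\[
\mu\bigl(B(x,r_n(x))\bigr)\ge r_n(x)^\beta.
\]
For each $\delta>0$ the family $\{B(x,r_n(x)):x\in Z,\ r_n(x)<\delta\}$ is a fine cover of $Z$. Applying the Besicovitch covering theorem in local charts of the ambient Riemannian manifold, we extract a countable subfamily $\{B(x_i,r_i)\}$ that still covers $Z$ and whose multiplicity is bounded by a constant $N$ depending only on $d_0$. Then
\[
\sum_i r_i^\beta\le\sum_i\mu\bigl(B(x_i,r_i)\bigr)\le N\mu(X)=N,
\]
and since each ball has diameter at most $2r_i<2\delta$, this gives $\mathcal H^\beta_{2\delta}(Z)\le 2^\beta N$. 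Letting $\delta\to0$ yields $\mathcal H^\beta(Z)\le 2^\beta N<\infty$, hence $\dim_H Z\le\beta$. Since $\beta>\alpha$ was arbitrary, $\dim_H Z\le\alpha$.

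The only mildly delicate point is verifying that the Besicovitch covering theorem applies on a compact Riemannian manifold with a uniformly controlled overlap constant; this is standard, handled either by passing to a finite atlas of charts with uniformly bounded metric distortion or by invoking the Riemannian version of Besicovitch directly. The Borel measurability of the level sets $Z_n$ in part~(1) is also routine, using right-continuity of $r\mapsto\mu(B(x,r))$ to reduce the condition to countably many rational $r$. No deep new ingredients are needed; the argument is essentially a clean two-step application of Propositions~\ref{dimension of measure_upper} and~\ref{dimension of measure_lower} already recorded in the paper.
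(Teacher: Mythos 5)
The paper states Proposition~\ref{ehdms} without proof, presenting it as a recalled standard fact from dimension theory (the surrounding text points to \cite{Pesin}, \cite{Y} and \cite{BW2006}), so there is no in-paper argument against which to compare. Your proof is correct and follows the standard route, so the main value here is to check it carefully and note a couple of small things.

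For part (1), after defining $Z_n$ you should note that the bound $\mu(B(x,r))\le r^{\beta}$ for $r<1/n$ extends to all $r>0$ with the constant $C=n^{\beta}$ (since $\mu(B(x,r))\le 1\le n^{\beta}r^{\beta}$ whenever $r\ge 1/n$), so that Proposition~\ref{dimension of measure_lower} applies exactly as stated. It is also worth observing that part~(1) is an immediate consequence of the identity~(\ref{adofhdm}), $\dim_H\mu=\operatorname{ess\,sup}\underline d_\mu$, which the paper already records: if $\underline d_\mu\ge\alpha$ $\mu$-a.e.\ then the essential supremum is at least $\alpha$. Your level-set argument essentially reproves the relevant half of (\ref{adofhdm}); that is perfectly fine, just slightly roundabout given what the paper has already set up.

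For part (2), your Besicovitch argument is correct, and you are right to flag the transfer to a compact Riemannian manifold via a finite atlas as the delicate point: Besicovitch genuinely requires finite-dimensional (locally Euclidean) structure, whereas Proposition~\ref{ehdms} is nominally stated for an arbitrary compact metric space $X$. In the paper's applications $X$ is always a manifold or a subset of one, so this is not a gap, but a marginally cleaner alternative, valid in any separable metric space, is the Vitali $5r$-covering lemma: from the fine cover $\{B(x,r):x\in Z,\ r\in\{r_n(x)\},\ r<\delta\}$ extract a countable \emph{disjoint} subfamily $\{B(x_i,r_i)\}$ with $Z\subseteq\bigcup_iB(x_i,5r_i)$, use disjointness to get $\sum_i r_i^{\beta}\le\sum_i\mu(B(x_i,r_i))\le\mu(X)=1$, and conclude $\mathcal H^{\beta}_{10\delta}(Z)\le 10^{\beta}$, hence $\dim_H Z\le\beta$. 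Either route yields the same conclusion; the $5r$ version avoids the chart-by-chart bookkeeping and the dimensional dependence of the Besicovitch constant.
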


\subsection{Hyperbolic set, Markov partition and Gibbs measures}
Let $M$ be a compact Riemannian manifold. Suppose $f$ is a $C^{1+\alpha}$ diffeomorphism on $M$ and $\Lambda\subset M$ is an $f$-invariant compact set. We say $\Lambda$ is a {\it hyperbolic set } if for any $x\in \Lambda$, the tangent space admits a decomposition $T_{x}M=E^{s}(x)\oplus E^{u}(x)$ with the following properties:
\begin{enumerate}
	\item The splitting is $Df$-invariant: $D_{x}f E^{\sigma}(x)=E^{\sigma}(f(x))$ for $\sigma=s,u$.
	\item The stable subspace $E^{s}(x)$ is uniformly contracting and the unstable subspace $E^{u}(x)$ is uniformly expanding: there are constants $C\geq 1$ and $0<\chi<1$ such that for every $n\geq 0$ and $v^{s,u}\in E^{s,u}(x)$, we have $$\|D_{x}f^{n}v^{s}\|\leq C\chi^{n}\|v^{s}\|\quad \text{and} \quad \|D_{x}f^{-n}v^{u}\|\leq C\chi^{n}\|v^{u}\|.$$
\end{enumerate}

Recall that a hyperbolic set $\Lambda$ is {\it locally maximal} if there exists an open neighborhood $U$ of $\Lambda$ such that $\Lambda=\bigcap_{n\in\mathbb{Z}}f^{n}(U)$. $f$ is called {\it topologically transitive} on $\Lambda$, if for any two nonempty (relative) open subsets $U,V\subset\Lambda$ there exists $n>0$ such that $f^n(U)\cap V\neq\varnothing$.
Assume that $\Lambda$ is a locally maximal hyperbolic set and $f$ is topologically transitive on $\Lambda$. A finite cover $\mathcal{P}=\{P_{1},P_{2},...,P_{l}\}$ of $\Lambda$ is called a {\it Markov partition} of $\Lambda$ with respect to $f$ if
\begin{enumerate}
	\item $P_{i}\neq \varnothing$ and $P_{i}=\overline{\text{int } P_{i}}$ for each $i=1,2,...,l$,
	\item $\text{int } P_{i} \cap \text{int } P_{j} =\varnothing$ whenever $i\neq j$,
	\item for each $x\in \text{int } P_{i}\cap f^{-1} (\text{int } P_{j})$, we have
	$$f(W^{s}_{\text{loc}}(x, f)\cap P_{i})\subset W^{s}_{\text{loc}}(f(x), f)\cap P_{j},$$
	$$f(W^{u}_{\text{loc}}(x, f)\cap P_{i})\supset W^{u}_{\text{loc}}(f(x), f)\cap P_{j},$$
\end{enumerate}
here the topology used for the interior is the induced topology on $\Lambda$. The elements in the Markov partition are called {\it rectangles}.

Let $\mathcal{P}=\{P_{1},P_{2},...,P_{l}\}$ be a Markov partition of $\Lambda$ with diameter as small as desired (we refer to \cite{KH} for details and references). We equip the space of sequences $\Sigma_{l}=\{1,2,...,l\}^{\mathbb{Z}}$ with the distance
\begin{align*}
	d(w,w^{\prime})\triangleq \sum_{j\in\mathbb{Z}}e^{-|j|}|i_{j}-i^{\prime}_{j}|,
\end{align*}
where $w=(\cdots i_{-1}i_{0}i_{1}\cdots)$ and $w^{\prime}=(\cdots i^{\prime}_{-1}i^{\prime}_{0}i^{\prime}_{1}\cdots)$. With this distance, $\Sigma_l$ becomes a compact metric space. We also consider the shift map $\sigma: \Sigma_{l}\rightarrow \Sigma_{l}$ defined by $(\sigma(w))_{j}=i_{j+1}$ for each $w=(\cdots i_{-1}i_{0}i_{1}\cdots)\in \Sigma_{l}$ and $j\in\mathbb{Z}$. The restriction of $\sigma$ to the set
\begin{align*}
	\Sigma_{A}\triangleq\{(\cdots i_{-1}i_{0}i_{1}\cdots)\in \Sigma_{l}:a_{i_{j}i_{j+1}}=1 \text{~for all~} j\in\mathbb{Z}\}
\end{align*}
is the two-sided {\it topological Markov chain} with the transition matrix $A=(a_{ij})$ of the Markov partition (i.e. $a_{ij}=1$ if $\text{int } P_{i}\cap f^{-1}(\text{int } P_{j})\neq \varnothing$ and $a_{ij}=0$ otherwise). It is easy to show that one can define a coding map $\pi:\Sigma_{A}\rightarrow \Lambda$ of the hyperbolic set $\Lambda$ by
\begin{align*}
	\pi(\cdots i_{-n}\cdots i_{0}\cdots i_{n}\cdots)=\bigcap_{n\in \mathbb{Z}}f^{-n}(P_{i_{n}}).
\end{align*}
The map $\pi$ is surjective and satisfies $\pi\circ\sigma=f\circ\pi$.

A sequence $\mathbf{i}=(i_{-m}\cdots i_{0}\cdots i_{n})$, where $1\leq i_{j}\leq l$, is called {\it admissible} if for $j=-m,...,0,...,n-1$, it holds that $\text{int } P_{j}\cap f^{-1}(\text{int } P_{j+1})\neq \varnothing$. Given an admissible sequence $\mathbf{i}=(i_{-m}\cdots i_{0}\cdots i_{n})$, one can define the cylinder
$$P_{i_{-m}\cdots i_{0}\cdots i_{n}}=\bigcap_{j=-m}^{n}f^{-j}(P_{i_{j}}).$$

We say a Borel probability measure $\mu$ on $\Lambda$ is a {\it Gibbs measure} for a continuous function $\varphi$ on $\Lambda$ if there exists $C>0$ such that for every $n\in \mathbb{N}$, every admissible sequence $(i_{0}i_{1}\cdots i_{n-1})$ and $x\in P_{i_{0}i_{1}\cdots i_{n-1}}$, we have
\begin{align*}
	C^{-1}\leq \frac{\mu(P_{i_{0}i_{1}\cdots i_{n-1}})}{\exp(-nP+S_{n}\varphi(x))}\leq C,
\end{align*}
where $P$ is a constant and $S_{n}\varphi(x)=\sum_{j=0}^{n-1}\varphi(f^{j}(x))$.

The following result was established by Climenhaga, Pesin and Zelerowicz in \cite{CliPZ}, which enables us to get a lower bound of the Hausdorff dimension of the intersection of the local unstable leaf and a hyperbolic set with dominated splitting.
\begin{Prop}\label{u-Gibbs}
	Suppose $\Lambda$ is a locally maximal hyperbolic set for a $C^{1+\alpha}$ diffeomorphism $f$ and $f$ is topologically transitive on $\Lambda$. Assume $\varphi:\Lambda\rightarrow \mathbb{R}$ is a H$\ddot{o}$lder potential. Then there exists a unique equilibrium state $\mu$ for $\varphi$. And the family of the conditional measures $\{\mu_{x}^{u}\}_{x\in\Lambda}$ of $\mu$ on the local unstable leaf has the $u$-Gibbs property for the potential $\varphi$, that is, there is $Q>0$ such that
	for every  $x\in\Lambda$, if $\pi(\cdots i_{-m}\cdots i_{0}\cdots i_{n}\cdots)=x$, then for each $n\in \mathbb{N}$ and $y\in W^{u}_{\text{loc}}(x, f)\cap P_{i_{0}\cdots i_{n-1}}$, we have
	\begin{align*}
		Q^{-1}\leq \frac{\mu_{x}^{u}(W^{u}_{\text{loc}}(x, f)\cap P_{i_{0}\cdots i_{n-1}})}{\exp(-nP(\varphi)+S_{n}\varphi(y))}\leq Q,
	\end{align*}
	where $P(f, \varphi)$ is the topological pressure.
\end{Prop}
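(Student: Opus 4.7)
The plan is to code $\Lambda$ by the Markov partition, establish the Gibbs property of $\mu$ at the level of forward cylinders by classical thermodynamic formalism, and then extract the $u$-Gibbs property of the conditional measures by disintegration and bounded distortion of the stable holonomy.

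For existence and uniqueness, I would pull $\varphi$ back through the coding map $\pi:\Sigma_A\to\Lambda$ with $\pi\circ\sigma=f\circ\pi$, obtaining a H\"older potential $\tilde\varphi=\varphi\circ\pi$ on the subshift of finite type $\Sigma_A$, whose transition matrix is irreducible by topological transitivity of $f|_\Lambda$. Sinai's cohomology trick replaces $\tilde\varphi$ by a H\"older potential depending only on future coordinates, after which the Ruelle--Perron--Frobenius theorem on $\Sigma_A^+$ produces a unique Gibbs equilibrium state, whose natural extension pushes forward under $\pi$ to the unique equilibrium state $\mu$ for $\varphi$ on $\Lambda$. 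The symbolic Gibbs bound transports back to
$$C^{-1}\le \frac{\mu(P^+_{i_0\cdots i_{n-1}})}{\exp(-nP(\varphi)+S_n\varphi(x))}\le C \qquad\text{for all } x\in P^+_{i_0\cdots i_{n-1}},$$
where $P^+_{i_0\cdots i_{n-1}}=\bigcap_{j=0}^{n-1}f^{-j}(P_{i_j})$ is the forward cylinder, a union of local unstable leaves inside $P_{i_0}$.

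To pass from this to the conditional bound I would use the measurable partition $\xi^u$ with $\xi^u(x)=W^u_{\text{loc}}(x)\cap P_{i_0(x)}$, which is subordinate to the unstable lamination and has $\{\mu_x^u\}$ as its system of conditional measures by definition. The Markov property gives $W^u_{\text{loc}}(y)\subset P_{i_0}$ for $y\in P_{i_0}$, so $\xi^u(y)\cap P^+_{i_0\cdots i_{n-1}}=W^u_{\text{loc}}(y)\cap P_{i_0\cdots i_{n-1}}$, and disintegration yields
$$\mu(P^+_{i_0\cdots i_{n-1}})=\int_{P_{i_0}}\mu_y^u\bigl(W^u_{\text{loc}}(y)\cap P_{i_0\cdots i_{n-1}}\bigr)\,d\mu(y).$$
Combining the H\"older distortion bound $|S_n\varphi(y)-S_n\varphi(z)|\le K$ along unstable cylinders (immediate from H\"older $\varphi$ and uniform expansion) with absolute continuity of the stable holonomy between unstable leaves of a rectangle, I would squeeze the integrand between two multiples of $\mu_x^u(W^u_{\text{loc}}(x)\cap P_{i_0\cdots i_{n-1}})$ with constants independent of $n$ and of the admissible word; since $\mu(P_{i_0})$ is bounded below by a positive constant, comparing with the forward-cylinder Gibbs bound for $\mu$ yields the advertised two-sided estimate for $\mu_x^u$.

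The principal obstacle is the holonomy step: one needs uniform upper and lower bounds on the Jacobian of the stable holonomy with respect to the conditional measures $\mu_y^u$. In the higher-dimensional non-conformal setting the stable holonomy is merely H\"older, so the required Jacobian estimate must be extracted from absolute continuity of the stable foliation, which in turn rests on the $C^{1+\alpha}$ hypothesis. This is the substantive analytic input beyond the symbolic thermodynamic formalism and is the real content of the proposition.
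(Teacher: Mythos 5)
Your route is a genuinely different one from the paper's. The paper does not reprove the Gibbs/u-Gibbs machinery: it cites Corollary 4.9, Theorem 4.10 and Theorem 4.11(3) of Climenhaga--Pesin--Zelerowicz for the $u$-Gibbs property of a family of leaf measures $\{m^C_x\}$, and then Remark \ref{remark} devotes itself entirely to one thing --- upgrading ``for $\mu$-a.e.\ $x$'' to ``for every $x\in\Lambda$.'' It does this by picking, for a non-generic $y$, a nearby generic $x$, defining $\mu_y^u:=\mu_x^u\circ\pi_{xy}^{-1}$ via the unstable holonomy $\pi_{xy}$ inside a rectangle, and then invoking full support of the Gibbs measure to guarantee such an $x$ exists. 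Your proposal, by contrast, rebuilds everything from the RPF theorem and disintegration. That self-contained approach is reasonable and what you write is broadly on the right track for the a.e.\ statement.

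The genuine gap is exactly the step your proposal does not mention: the proposition asserts the two-sided bound for \emph{every} $x\in\Lambda$, but the Rokhlin disintegration $\mu(P^+_{i_0\cdots i_{n-1}})=\int_{P_{i_0}}\mu_y^u(\cdot)\,d\mu(y)$ only defines $\mu_y^u$ up to a $\mu$-null set, so ``squeezing the integrand between multiples of $\mu_x^u(\cdot)$'' can only ever be asserted for $\mu$-a.e.\ $x$. To reach every $x$ you have to say what $\mu_x^u$ even means at a non-generic point and why the same constant works there; this is precisely the content of the paper's Remark (define by holonomy from a nearby generic leaf, use that Gibbs measures charge all open sets). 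Without this extension your argument proves a strictly weaker statement than the one the paper needs, because later on (Lemmas \ref{lower bound 1} and \ref{upper bound 1}) the bound is applied at an \emph{arbitrary} $x\in\Lambda$.

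A secondary point, worth flagging because it affects the ``substantive analytic input'' you identify: the uniform two-sided Jacobian bound you want is not really a consequence of absolute continuity of the stable foliation. Absolute continuity controls how holonomy moves \emph{Lebesgue} measure on leaves; for a general H\"older equilibrium state the conditional measures $\mu_y^u$ are singular with respect to leaf volume, and the geometric absolute-continuity theorem says nothing about them. The correct source of the bounded Jacobian is the symbolic structure you already set up: after Sinai's cohomology reduction the conditional measure of a forward cylinder is literally independent of the past coordinate, so the symbolic ``holonomy'' Jacobian is identically one, and pushing forward by $\pi$ gives a uniformly bounded geometric Jacobian by the H\"older distortion estimate $|S_n\varphi(y)-S_n\varphi(z)|\le K$ plus the Markov structure of the rectangles. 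Framing the step this way both removes the appeal to the (irrelevant) Lebesgue absolute-continuity theorem and makes clear that the constant $Q$ does not deteriorate in the higher-dimensional non-conformal setting, which is the point the paper is at pains to emphasize.
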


\begin{Rem}\label{remark}
	This Proposition is a direct result of Corollary $4.9$, Theorem $4.10$ and $(3)$ of Theorem $4.11$ in \cite{CliPZ}. For readers' convenience, we give a short explanation. In \cite{CliPZ}, this result was stated in terms of Bowen balls. But there is no essential difference between Bowen balls and cylinders. Besides, in \cite{CliPZ}, the u-Gibbs property was built for a sequence of reference measures $\{m_{x}^{C}\}_{x\in\Lambda}$ and they obtained the equivalence of the reference measures and the conditional measures along the local unstable leaves for almost every point. But we can still deduce the u-Gibbs property of the conditional measure for every point. In detail, for $x\in \Lambda$ such that $\mu_{x}^{u}$ is equivalent to $m_{x}^{C}$, we choose a suitable size of neighborhood $U(x)$ of $x$. Here the size only depends on the size of the Markov partition and the local product property of $\Lambda$. For $y\in U(x)\setminus W^{u}_{\text{loc}}(x, f)$, define
	$\mu_{y}^{u}=\mu_{x}^{u}\circ\pi_{xy}^{-1}$, where $\pi_{xy}$ is the holonomy map from $W_{\text{loc}}^{u}(x, f)$ to $W_{\text{loc}}^{u}(y, f)$. The u-Gibbs property of the measure $\mu_{y}^{u}$ follows from Corollary $4.9$, Theorem $4.10$ and $(3)$ of Theorem $4.11$ in \cite{CliPZ}. By the Gibbs property of the measure $\mu$, $\mu(B)>0$ for every open ball $B$. This gives the u-Gibbs property of the measure $\mu_{y}^{u}$ for every $y\in \Lambda$.
\end{Rem}

\subsection{Approximation of hyperbolic measures by hyperbolic sets with dominated splittings}
First we recall the definition of the dominated splitting.
Let $M$ be a $d_{0}$-dimensional compact smooth Riemannian manifold and $f:M\rightarrow M$ be a $C^{1+\alpha}$ diffeomorphism. Suppose $\Lambda\subset M$ is a compact $f$-invariant set. We say $\Lambda$ admits a {\it dominated splitting} if there is a continuous invariant splitting $T_{\Lambda}M=E\oplus F$ and constants $C>0,\lambda\in (0,1)$ such that for each $x\in \Lambda$, $n\in \mathbb{N}$, $0\neq u\in E(x)$ and $0\neq v\in F(x)$, it holds that
\begin{align*}
\frac{\|D_{x}f^{n}(u)\|}{\|u\|}\leq C\lambda^{n}\frac{\|D_{x}f^{n}(v)\|}{\|v\|}.
\end{align*}
We write $E\preceq F$ if $F$ dominates $E$. Furthermore, given $0<\ell\leq d_{0}$, we say a continuous invariant splitting $T_{\Lambda}M=E_{1}\oplus...\oplus E_{\ell}$ dominated if there are numbers $\lambda_{1}<\lambda_{2}<...<\lambda_{\ell}$, constants $C>0$ and $0<\varepsilon<\min_{1\leq i\leq\ell-1}\{\frac{\lambda_{i+1}-\lambda_{i}}{100}\}$ such that for every $x\in \Lambda$, $n\in\mathbb{N}$ and $1\leq j\leq \ell$ and each unit vector $u\in E_{j}(x)$, it holds that
\begin{align*}
C^{-1}\exp[n(\lambda_{j}-\varepsilon)]\leq \|D_{x}f^{n}(u)\|\leq C \exp[n(\lambda_{j}+\varepsilon)].
\end{align*}
In particular, $E_{1}\preceq...\preceq E_{\ell}$. We shall use the notion $\{\lambda_{j}\}$-dominated when we want to stress the dependence on the numbers $\{\lambda_{j}\}$.

The authors in \cite{katok1980} and \cite{ACW} gave the proof of the following result, which enables us to approximate a hyperbolic measure by hyperbolic sets with dominated splittings.
\begin{Thm}\cite{katok1980,ACW}\label{Katok}
Let $f$ be a $C^{1+\alpha}$ diffeomorphism on a compact Riemannian manifold $M$ and $\mu$ be a hyperbolic ergodic invariant measure for $f$ with $h_{\mu}(f)>0$. Then for any $\varepsilon>0$ and a weak$^{\ast}$ neighborhood $\mathcal{V}$ of $\mu$ in the space of $f$-invariant probability measures on $M$, there exists an $f$-invariant compact subset $\Lambda_{\varepsilon}\subset M$ such that
\begin{enumerate}[\quad \quad$(1)$]
\item $\Lambda_{\varepsilon}$ is $\varepsilon$-close to the support set of $\mu$ in the Hausdorff distance,
\item $h_{\text{top}}(f|_{\Lambda_{\varepsilon}})\geq h_{\mu}(f)-\varepsilon$,
\item all the invariant probability measures supported on $\Lambda_{\varepsilon}$ lie in $\mathcal{V}$,
\item there is  a $\{\lambda_{j}(\mu)\}$-dominated splitting $TM=E_{1}\oplus E_{2}\oplus...\oplus E_{\ell}$ over $\Lambda_{\varepsilon}$, where $\lambda_{1}(\mu)<...<\lambda_{\ell}(\mu)$ are distinct Lyapunov exponents of $f$ with respect to the measure $\mu$.
\end{enumerate}
\end{Thm}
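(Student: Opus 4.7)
The plan is to combine Theorem~\ref{hyperbolic1} with the Ledrappier--Young and Barreira--Pesin--Schmeling dimension formulas, and to estimate $\dim_H\Lambda_n$ by placing a suitable product measure on each approximating hyperbolic set. The one-dimensional stable bundle hypothesis is what lets me convert pressure information into an \emph{exact} pointwise dimension along stable leaves, bypassing the fact that in higher dimensions stable/unstable holonomies are only H\"older.

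First I apply Theorem~\ref{hyperbolic1} together with Theorem~\ref{Katok} to obtain, for each small $\varepsilon>0$, a hyperbolic set $\Lambda_\varepsilon$ carrying a $\{\lambda_j(\mu)\}$-dominated splitting $TM|_{\Lambda_\varepsilon}=E_1\oplus\cdots\oplus E_\ell\oplus E_{\ell+1}$ and satisfying $\dim_H(\Lambda_\varepsilon\cap W^u_{\text{loc}}(x,f))\to u:=\dim W^u(x,f)$ uniformly in $x$ as $\varepsilon\to 0$. Passing to the iterate $f^{2^n}$ to tame distortion, I introduce the superadditive potentials $-\psi^t(\cdot,f^{2^n})$ and $\phi^t(\cdot,f^{2^n})$ in the unstable and stable directions (as in \eqref{star2}), and let $t_n,t_n'$ be the roots of the Bowen equations $P(f^{2^n}|_{\Lambda_\varepsilon},-\psi^{t}(\cdot,f^{2^n}))=0$ and $P(f^{2^n}|_{\Lambda_\varepsilon},\phi^{t}(\cdot,f^{2^n}))=0$, with unique equilibrium states $\mu_n^u,\mu_n^s$. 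Applying Proposition~\ref{u-Gibbs} to $f^{2^n}$ and to its inverse, the conditional measures $\mu_{n,x}^u$ and $\mu_{n,x}^s$ have the $u$-Gibbs, respectively $s$-Gibbs, property. Feeding the dominated-splitting bounds on $\|Df^j|_{E^{u/s}}\|$ into the two-sided Gibbs inequality yields, for every $x\in\Lambda_\varepsilon$,
$$t_n\le \underline d_{\mu_{n,x}^u}(x)\le \overline d_{\mu_{n,x}^u}(x)\le u,\qquad d_{\mu_{n,x}^s}(x)=t_n',$$
the latter being an equality because the stable bundle is one-dimensional and the conditional measures on stable leaves are then effectively conformal. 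The limits $\lim_{\varepsilon\to 0}\lim_{n\to\infty}t_n=u$ and $\lim_{\varepsilon\to 0}\lim_{n\to\infty}t_n'=h_\mu(f)/(-\lambda_{\ell+1}(\mu))$ are extracted from Theorem~\ref{hyperbolic1} and from the variational principle together with Theorem~\ref{Katok}(2),(4).

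Next, on a neighbourhood of each $x\in\Lambda_\varepsilon$ I use the local product structure of a small Markov partition to define $\hat\mu_n=\mu_{n,x}^u\times\mu_{n,x}^s$, following the idea in~\cite{BV}. A ball $B(y,r)$ is sandwiched between rectangles whose unstable and stable radii are each comparable to $r$; combining the pointwise dimension bounds above with the uniform contraction/expansion rates from the $\{\lambda_j(\mu)\}$-dominated splitting, Propositions~\ref{dimension of measure_upper}--\ref{ehdms} yield the two-sided estimate
$$t_n+t_n'\le \dim_H\Lambda_\varepsilon\le u+t_n'.$$
Letting $n\to\infty$ and then $\varepsilon\to 0$ gives $\dim_H\Lambda_\varepsilon\to u+h_\mu(f)/(-\lambda_{\ell+1}(\mu))$, and a diagonal enumeration produces the desired sequence $\Lambda_n$ in the theorem.

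Finally I identify this limit with $\dim_H\mu$. Barreira--Pesin--Schmeling gives $\dim_H\mu=\dim_H^u\mu+\dim_H^s\mu$, Ledrappier--Young in the one-dimensional stable setting gives $\dim_H^s\mu=h_\mu(f)/(-\lambda_{\ell+1}(\mu))$, and the SRB hypothesis forces $\dim_H^u\mu=u$: the conditional measures of $\mu$ on unstable leaves are absolutely continuous with respect to the induced Riemannian volume, so their pointwise dimension equals $u$ at $\mu$-a.e.\ $x$. The main obstacle is precisely the ball-versus-rectangle comparison in the previous paragraph, because in higher dimensions the local product structure is only H\"older and rectangles are genuinely anisotropic. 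I handle this by working with $f^{2^n}$ for $n$ large, so that the relevant unstable/stable contraction rates are dictated by the dominated splitting up to an error that shrinks with $\varepsilon$; the H\"older-versus-Lipschitz discrepancy then contributes only $o(1)$ as $n\to\infty$ followed by $\varepsilon\to 0$, which is enough to pass from the measure estimates on $\hat\mu_n$ to the Hausdorff dimension estimates on $\Lambda_\varepsilon$.
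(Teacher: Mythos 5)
You have addressed the wrong statement. The statement to be proved is Theorem~\ref{Katok}, the Katok/Avila--Crovisier--Wilkinson approximation theorem: given a hyperbolic ergodic measure $\mu$ with positive entropy, one produces a compact invariant set $\Lambda_\varepsilon$ that is Hausdorff-close to $\supp\mu$, carries nearly as much topological entropy, has all its invariant measures weak$^{\ast}$-close to $\mu$, and admits a $\{\lambda_j(\mu)\}$-dominated splitting corresponding to the Oseledec splitting. The paper does not prove this result at all; it cites it from \cite{katok1980} and \cite{ACW}. What you have written is instead a proof outline for Theorem~\ref{Main B}, and it explicitly invokes Theorem~\ref{Katok} (both directly and through Theorem~\ref{hyperbolic1}, whose proof also depends on it) as an input. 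As a purported proof of Theorem~\ref{Katok} the argument is therefore circular, and as a matter of content it has nothing to do with the statement: it is about pointwise dimensions of equilibrium states, Bowen equations, product measures $\hat\mu_n=\mu^u_{n,x}\times\mu^s_{n,x}$, and the Barreira--Pesin--Schmeling formula, none of which bears on how one constructs the hyperbolic set $\Lambda_\varepsilon$ in the first place.

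An actual proof of Theorem~\ref{Katok} would have to run through Pesin theory: take a Pesin block of uniformly large measure on which Lyapunov charts have uniform size and hyperbolicity constants, use recurrence and the closing/shadowing lemma for nonuniformly hyperbolic systems to produce many hyperbolic periodic orbits shadowing generic segments, assemble these into a horseshoe whose topological entropy nearly realizes $h_\mu(f)$ (this is Katok's original construction, giving items (1)--(3)); then, following Avila--Crovisier--Wilkinson, one refines the choice of Pesin block so that the Oseledec filtration varies continuously and with controlled angles, and shows that the resulting invariant cone fields persist on the horseshoe, yielding the $\{\lambda_j(\mu)\}$-dominated splitting of item (4) with the prescribed expansion/contraction rates up to $\varepsilon$. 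Your proposal contains no trace of this construction, so it cannot be accepted as a proof of the stated theorem.
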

\iffalse
Suppose $f$ is a $C^{1+\alpha}$ diffeomorphism on $M$ and $\Lambda\subset M$ is an $f-$invariant compact set. We say $\Lambda$ is a hyperbolic set if for any $x\in \Lambda$, the tangent space admits a decomposition $T_{x}M=E^{s}(x)\oplus E^{u}(x)$ with the following properties:
\begin{enumerate}
\item The splitting is $Df-$invariant: $D_{x}f E^{\sigma}(x)=E^{\sigma}(f(x))$ for $\sigma=s,u$.
\item The stable subspace $E^{s}(x)$ is uniformly contracting and the unstable subspace $E^{u}(x)$ is uniformly expanding: there are constants $C\geq 1$ and $0<\chi<1$ such that for every $n\geq 0$ and $v^{s,u}\in E^{s,u}(x)$, we have $$\|D_{x}f^{n}v^{s}\|\leq C\chi^{n}\|v^{s}\|\quad and \quad \|D_{x}f^{-n}v^{u}\|\leq C\chi^{n}\|v^{u}\|.$$
\end{enumerate}
\fi

\subsection{Classical topological pressure and super-additive topological pressure}
First we recall the definition of the topological pressure. For more information, one can refer to \cite{Pesin, WALTERS}.
Let $X$ be a compact metric space equipped with the metric $d$ and $f:X\rightarrow X$ be a continuous map. Given $n\in\mathbb{N}$, define the metric $d_{n}$ on $X$ as
\begin{align*}
	d_{n}(x,y)=\max\{d(f^{k}(x),f^{k}(y)):0\leq k<n\}.
\end{align*}
Given $r>0$, denote by $B_{n}(x,r)=\{y:d_{n}(x,y)<r\}$ the {\it Bowen balls}. We say a set $E\subset X$ is $(n,r)${\it -separated} for $X$ if $d_n(x,y)>\varepsilon$ for any two distinct points $x,y\in E$.
A sequence of continuous potential functions $\Phi=\{\varphi_n\}_{n\geq1}$ is called {\it sub-additive}, if
\begin{equation*}
\varphi_{m+n}\leq\varphi_n+\varphi_m\circ f^n, \text{ for all } m,n\geq1.
\end{equation*}
Similarly, we call a sequence of continuous potential functions $\Psi=\{\psi_n\}_{n\geq1}$ {\it super-additive} if $-\Psi=\{-\psi_n\}_{n\geq1}$ is sub-additive.

Let $\Phi=\{\varphi_n\}_{n\geq1}$ be a sub-additive sequence of continuous potentials on $X$, set
\begin{equation*}
P_n(\Phi, \varepsilon)=\sup\{\sum_{x\in E}e^{\varphi_n(x)}: E \text{ is an } (n,\varepsilon) \text{-separated subset of } X\}.
\end{equation*}
The quantity
\begin{equation*}
P(f,\Phi)=\lim_{\varepsilon\to0}\limsup_{n\to\infty}\frac1n\log P_n(\Phi,\varepsilon)
\end{equation*}
is called the {\it sub-additive topological pressure} of $\Phi$.
%Given a continuous function $\varphi:X\rightarrow \mathbb{R}$.
%Denote by $S_{n}\varphi(x)=\sum_{k=0}^{n-1}\varphi(f^{k}(x))$ for the $n$th Birkhoff sum along the orbit of $x$. Denote by
%\begin{align*}
	%Z_{n}^{span}(X,\varphi,r)=\inf\{\sum_{x\in E}e^{S_{n}\varphi(x)}:E\subset X \text{~is~} (n,r)-\text{spanning for}~X\}.
%\end{align*}
%The {\it topological pressure} is defined as
%$$P(f,\varphi)=\lim_{r\rightarrow 0}\limsup_{n\rightarrow\infty}\frac{1}{n}\log Z_{n}^{span}(X,\varphi,r).$$
The sub-additive topological pressure satisfies the following  variational principle, see \cite{cfh}.
\begin{Thm}\label{variational principle}
Let $\Phi=\{\varphi_n\}_{n\geq1}$ be a sub-additive sequence of continuous potentials on a compact dynamical system $(X,f)$. Then
\begin{equation*}
	P(f,\Phi)=\sup\{h_{\mu}(f)+\mathcal{F}_*(\Phi,\mu)| ~\mu\in\mathcal{M}_{inv}(f), \mathcal{F}_*(\Phi,\mu)\neq-\infty\},
\end{equation*}
where $\mathcal{F}_*(\Phi,\mu)=\lim_{n\to\infty}\frac1n\int\varphi_n d\mu$ (the equality is due to the standard sub-additive argument), and $\mathcal{M}_{inv}(f)$ is the set of all $f$-invariant Borel probability measures on X.
\end{Thm}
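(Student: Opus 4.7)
The plan is to establish both inequalities in the variational principle, adapting the classical Misiurewicz argument to the sub-additive setting. The lower bound $h_\mu(f) + \mathcal{F}_*(\Phi,\mu) \leq P(f,\Phi)$ for every $\mu \in \mathcal{M}_{inv}(f)$ with $\mathcal{F}_*(\Phi,\mu) > -\infty$ is the easier direction; the matching upper bound, which must produce such a measure, is where sub-additivity forces an extra approximation step compared to the classical additive case.

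For the lower bound I would reduce to the ergodic case using affineness of both $h_\mu(f)$ and $\mathcal{F}_*(\Phi,\cdot)$ (the latter following from the sub-additive ergodic theorem). Fix a finite Borel partition $\xi$ of small diameter with $\mu(\partial \xi) = 0$. Shannon--McMillan--Breiman produces, for large $n$, a set $A_n$ with $\mu(A_n)$ close to $1$ on which every $n$-cylinder $\xi_0^{n-1}(x)$ has $\mu$-measure at most $e^{-n(h_\mu(f,\xi) - \varepsilon)}$. Kingman's sub-additive ergodic theorem further lets one restrict to points where $\varphi_n(x) \geq n(\mathcal{F}_*(\Phi,\mu) - \varepsilon)$. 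Selecting one point per cylinder yields an approximately $(n,r)$-separated set (with $r$ smaller than the Lebesgue number of $\xi$) whose cardinality is at least $e^{n(h_\mu(f,\xi)-\varepsilon)}\mu(A_n)$ and whose $e^{\varphi_n}$-sum is of order at least $e^{n(h_\mu(f,\xi) + \mathcal{F}_*(\Phi,\mu) - 2\varepsilon)}$, giving the desired lower bound on $P_n(\Phi,r)$.

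For the upper bound, I would use the empirical-measure construction: for each $n$ and $\varepsilon$, choose an $(n,\varepsilon)$-separated set $E_n$ with $Z_n := \sum_{x \in E_n} e^{\varphi_n(x)} \geq \tfrac12 P_n(\Phi,\varepsilon)$, and define
\begin{equation*}
\sigma_n = \frac{1}{Z_n}\sum_{x \in E_n} e^{\varphi_n(x)}\delta_x, \qquad \mu_n = \frac{1}{n}\sum_{k=0}^{n-1} f^k_*\sigma_n.
\end{equation*}
Any weak$^*$ subsequential limit $\mu$ of $\{\mu_n\}$ is $f$-invariant. The maximum-entropy property of $\sigma_n$ among probability measures supported on $E_n$ gives
\begin{equation*}
\log Z_n \leq H_{\sigma_n}(\xi_0^{n-1}) + \int \varphi_n \, d\sigma_n,
\end{equation*}
and the standard concavity/block argument (with $\xi$ satisfying $\mu(\partial\xi)=0$ and diameter less than $\varepsilon$) converts the first term into $n(h_\mu(f,\xi) + o(1))$ as $n\to\infty$.

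The main obstacle is bounding $\limsup_n \frac{1}{n}\int \varphi_n \, d\sigma_n$ by $\mathcal{F}_*(\Phi,\mu)$: in the additive case one rewrites $\int S_n \varphi \, d\sigma_n = n \int \varphi \, d\mu_n \to n\int\varphi\,d\mu$, but sub-additivity only gives an inequality. The workaround is to fix $m$ large, split $n = qm + r$, and iterate the sub-additive relation starting from each offset $0 \leq i < m$; combining these via the identity $\frac{1}{n}\sum_{k=0}^{n-1}\int \varphi_m \circ f^k \, d\sigma_n = \int \varphi_m \, d\mu_n$ yields
\begin{equation*}
\frac{1}{n}\int \varphi_n \, d\sigma_n \leq \frac{1}{m}\int \varphi_m \, d\mu_n + \frac{C_m}{n},
\end{equation*}
where $C_m$ depends only on $\max_{i \leq m}\|\varphi_i\|_\infty$. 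Letting $n \to \infty$ gives $\frac{1}{m}\int \varphi_m \, d\mu$, and then letting $m \to \infty$ yields $\mathcal{F}_*(\Phi,\mu)$. Combining with the entropy estimate, then sending $\varepsilon \to 0$ and refining $\xi$, completes the proof.
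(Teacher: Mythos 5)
The paper does not prove this theorem; it is stated as a direct citation of Cao--Feng--Huang \cite{cfh}, so there is no argument in the paper to compare against. Your sketch is the Misiurewicz scheme adapted to sub-additive potentials, which is essentially what \cite{cfh} does, and your treatment of the one genuinely new point is correct: the exact telescoping identity $\int S_n\varphi\,d\sigma_n = n\int\varphi\,d\mu_n$ is unavailable, and replacing it by the inequality obtained from iterating sub-additivity and averaging over the $m$ starting offsets, giving $\frac1n\int\varphi_n\,d\sigma_n \le \frac1m\int\varphi_m\,d\mu_n + O(1/n)$ with the error controlled by $\max_{i\le m}\|\varphi_i\|_\infty$, is exactly the right fix; sending $n\to\infty$ and then $m\to\infty$ produces $\mathcal F_*(\Phi,\mu)$. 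You should also note the degenerate case: if the limit measure $\mu$ has $\mathcal F_*(\Phi,\mu)=-\infty$, the same chain of inequalities forces $P(f,\Phi)=-\infty$, so the stated equality holds vacuously.

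The lower bound as written has a gap. Selecting one point from each element of $\bigvee_{k=0}^{n-1}f^{-k}\xi$ does not produce an $(n,r)$-separated set: two points in adjacent cells of $\xi$ can be arbitrarily close, so a small diameter for $\xi$ gives only the reverse implication (same $n$-cylinder implies $d_n<\varepsilon$), and a measurable partition has no Lebesgue number to invoke. This step needs one of the classical repairs: (i) Walters' device of shrinking each cell of $\xi$ to a compact subset so the closed cells are pairwise at positive distance, adjoining a garbage cell whose conditional-entropy contribution is $<\varepsilon$, after which distinct joined cells really are dynamically separated; or (ii) Katok's entropy formula for ergodic $\mu$, which shows that for small $\varepsilon$ a maximal $(n,\varepsilon)$-separated subset of the set where Kingman and Shannon--McMillan--Breiman are both within tolerance (a set of measure $\ge 1-\delta$) has cardinality at least $e^{n(h_\mu(f)-o(1))}$, since such a set is $(n,2\varepsilon)$-spanning for it. With either repair the direction closes, and the rest of your outline for it -- ergodic decomposition via affineness of $h_\mu$ and $\mathcal F_*(\Phi,\cdot)$, Kingman for the potential -- is fine.
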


\begin{Rem}
If $\varPhi=\{\varphi_n\}_{n\geq1}$ is \emph{additive} in the sense that $\varphi_n(x)=\varphi(x)+\varphi(fx)+\cdots+\varphi(f^{n-1}x)\triangleq S_n\varphi(x)$ for some continuous function $\varphi: X\to \mathbb{R}$, we simply denote the topological pressure $P(f, \varPhi)$ as $P(f, \varphi)$.
\end{Rem}

Though it is unknown whether the variational principle holds for super-additive topological pressure, Cao, Pesin and Zhao gave an alternative definition via variational principle in \cite{caopesinzhao2019}.
Given a sequence of super-additive continuous potentials $\Psi=\{\psi_{n}\}_{n\geq 1}$ on a compact dynamical system $(X, f)$,
%that is a sequence of continuous potentials satisfying
%$$\psi_{m+n}(x)\geq \psi_{n}(x)+\psi_{m}(f^{n}(x))$$
%for each $m,n\in\mathbb{N}$ and $x\in X$.
the {\it super-additive topological pressure} of $\Psi$ is defined as
\begin{align*}
	P_{\text{var}}(f,\Psi)\triangleq \sup \{h_{\mu}(f)+\mathcal{F}_{\ast}(\Psi,\mu):\mu\in \mathcal{M}(f), \mathcal{F}_*(\Psi,\mu)\neq-\infty\}.
\end{align*}
where %$\mathcal{M}(f)$ denotes the set of $f-$invariant measures and
$$\mathcal{F}_{\ast}(\Psi,\mu)=\lim_{n\rightarrow \infty}\frac{1}{n}\int \psi_{n}~d\mu=\sup_{n\in \mathbb{N}} \frac{1}{n}\int \psi_{n}~d\mu.$$
The second equality is due to the standard sub-additive argument.
It was proved in \cite{caopesinzhao2019} that the following relation between the super-additive topological pressure and the topological pressure for additive potentials holds.
\begin{Prop}\label{relation}
	Let $\Psi=\{\psi_{n}\}_{n\geq 1}$ be a  sequence of super-additive continuous potentials on $X$. Then it holds that
	\begin{align*}
		P_{\text{var}}(f,\Psi)=\lim_{n\rightarrow\infty} P(f,\frac{\psi_{n}}{n})=\lim_{n\rightarrow \infty}\frac{1}{n}P(f^{n},\psi_{n}).
	\end{align*}
\end{Prop}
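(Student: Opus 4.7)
The plan is to prove the two equalities separately, via the variational principle for additive topological pressure combined with Fekete's lemma applied to the super-additive sequence $\Psi$.

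For the first equality $P_{\text{var}}(f,\Psi)=\lim_{n\to\infty} P(f,\psi_n/n)$, fix $n$ and $\mu\in\mathcal{M}_{inv}(f)$. The classical variational principle applied to the continuous additive potential $\psi_n/n$ gives $h_\mu(f)+\tfrac{1}{n}\int\psi_n\,d\mu\leq P(f,\psi_n/n)$. Super-additivity of $\Psi$ combined with Fekete's lemma yields $\tfrac{1}{n}\int\psi_n\,d\mu\to\mathcal{F}_*(\Psi,\mu)=\sup_n\tfrac{1}{n}\int\psi_n\,d\mu$; letting $n\to\infty$ and then taking supremum over $\mu$ produces $\liminf_n P(f,\psi_n/n)\geq P_{\text{var}}(f,\Psi)$. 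Conversely, the variational principle gives $P(f,\psi_n/n)=\sup_\mu\{h_\mu(f)+\tfrac{1}{n}\int\psi_n\,d\mu\}\leq\sup_\mu\{h_\mu(f)+\mathcal{F}_*(\Psi,\mu)\}=P_{\text{var}}(f,\Psi)$ for every $n$, supplying the matching upper bound.

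For the second equality, let $L_n=\tfrac{1}{n}P(f^n,\psi_n)$. The inequality $\liminf_n L_n\geq P_{\text{var}}(f,\Psi)$ is immediate: restricting the variational sup for $P(f^n,\psi_n)$ to $f$-invariant measures (which are automatically $f^n$-invariant) and invoking Abramov's formula $h_\mu(f^n)=n h_\mu(f)$ reduces the argument to the one above. For the matching upper bound, given any $f^n$-invariant $\nu$, form the $f$-invariant average $\bar\nu=\tfrac{1}{n}\sum_{i=0}^{n-1}f^i_*\nu$. Affinity of $f^n$-entropy combined with Abramov yields $h_\nu(f^n)=n h_{\bar\nu}(f)$. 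On the potential side, super-additivity and $f^n$-invariance of $\nu$ yield $\int\psi_n\,d\nu\leq\tfrac{1}{k}\int\psi_{kn}\,d\nu$ for every $k\geq 1$; passing to the limit in $k$ one establishes
\[
\lim_{k\to\infty}\tfrac{1}{kn}\int\psi_{kn}\,d\nu\,=\,\lim_{k\to\infty}\tfrac{1}{kn}\int\psi_{kn}\,d\bar\nu\,\leq\,\mathcal{F}_*(\Psi,\bar\nu),
\]
and therefore $\tfrac{1}{n}\int\psi_n\,d\nu\leq\mathcal{F}_*(\Psi,\bar\nu)$. It follows that $L_n\leq\sup_{\bar\nu}\{h_{\bar\nu}(f)+\mathcal{F}_*(\Psi,\bar\nu)\}=P_{\text{var}}(f,\Psi)$ for every $n$.

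The principal obstacle is the identity displayed above: for finite $k$ the quantities $\int\psi_{kn}\circ f^i\,d\nu$ (whose average over $i$ equals $n\int\psi_{kn}\,d\bar\nu$) need not match $\int\psi_{kn}\,d\nu$, since $\nu$ is only $f^n$-invariant. The mechanism for the asymptotic identity is to apply super-additivity in two directions, $\psi_{(k+1)n}\geq\psi_i+\psi_{kn}\circ f^i+\psi_{n-i}\circ f^{kn+i}$ and $\psi_{kn}\circ f^i\geq\psi_{n-i}\circ f^i+\psi_{(k-1)n+i}\circ f^n$, and to exploit $f^n$-invariance to turn the tail terms $\psi_{n-i}\circ f^{kn+i}$ and $\psi_{n-i}\circ f^i$ into quantities bounded uniformly in $k$ by $\|\psi_{n-i}\|_\infty$. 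Dividing by $kn$ sandwiches each $\tfrac{1}{kn}\int\psi_{kn}\circ f^i\,d\nu$ between two expressions converging to $\lim_k\tfrac{1}{kn}\int\psi_{kn}\,d\nu$, and averaging over $i$ identifies this common limit with $\lim_k\tfrac{1}{kn}\int\psi_{kn}\,d\bar\nu$; being a subsequence of $\tfrac{1}{m}\int\psi_m\,d\bar\nu$, this last quantity is bounded by $\mathcal{F}_*(\Psi,\bar\nu)$. Implementing this bookkeeping cleanly is the only nontrivial step.
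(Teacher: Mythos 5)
The paper does not prove Proposition \ref{relation} at all; it simply cites \cite{caopesinzhao2019} for it, so there is no in-paper proof to compare against. Your argument is a correct, self-contained proof.

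The first equality is clean: for each $f$-invariant $\mu$ the sequence $\frac1n\int\psi_n\,d\mu$ is genuinely super-additive (using $f$-invariance to kill the shift in $\psi_m\circ f^n$), so Fekete gives convergence to $\mathcal{F}_*(\Psi,\mu)=\sup_n\frac1n\int\psi_n\,d\mu$; the two-sided sandwich against the additive variational principle then works exactly as you describe. For the second equality, the lower bound $\liminf_n\frac1n P(f^n,\psi_n)\geq P_{\mathrm{var}}$ via $\mathcal{M}_{inv}(f)\subset\mathcal{M}_{inv}(f^n)$ and Abramov is standard. The genuinely delicate part is the upper bound, and your identification of the obstacle and of the double-sided super-additivity estimate is right. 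I verified the bookkeeping: from $\psi_{(k+1)n}\geq\psi_i+\psi_{kn}\circ f^i+\psi_{n-i}\circ f^{kn+i}$ and $\psi_{kn}\circ f^i\geq\psi_{n-i}\circ f^i+\psi_{(k-1)n+i}\circ f^n$, after integrating against the $f^n$-invariant $\nu$, the short boundary pieces $\psi_i$, $\psi_{n-i}$ contribute $O(1)$, and one further application of super-additivity to pass from $\int\psi_{(k-1)n+i}\,d\nu$ to $\int\psi_{(k-1)n}\,d\nu$ (picking up another $O(1)$ term) closes the sandwich. Dividing by $kn$, each $\frac{1}{kn}\int\psi_{kn}\circ f^i\,d\nu$ has the same limit as $\frac{1}{kn}\int\psi_{kn}\,d\nu$, and averaging over $i$ identifies this with $\mathcal{F}_*(\Psi,\bar\nu)$ since $\bar\nu$ is $f$-invariant and $kn$ is a subsequence along which the monotone ratios converge.

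One remark on hypotheses: your step $h_\nu(f^n)=n\,h_{\bar\nu}(f)$ relies on $h_{f^i_*\nu}(f^n)=h_\nu(f^n)$, which you get for free when $f$ is a homeomorphism (as in this paper, where the proposition is only applied to $f|_\Lambda$ for a diffeomorphism). In the non-invertible setting of \cite{caopesinzhao2019} (expanding maps) this equality still holds but needs a separate justification (e.g. via the natural extension or a finite skew-product over $\mathbb{Z}/n\mathbb{Z}$), so if you intended the proposition in full generality you should flag that step.

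Finally, two cosmetic points: the displayed inequality "$\lim_k\frac{1}{kn}\int\psi_{kn}\,d\bar\nu\leq\mathcal{F}_*(\Psi,\bar\nu)$" is actually an equality (it is the limit of the full increasing sequence restricted to the multiples of $n$), although the inequality suffices; and in "$f^n$-invariance turns the tail terms into quantities bounded by $\|\psi_{n-i}\|_\infty$" the term that becomes bounded is the short piece $\psi_{n-i}$, whereas $\psi_{(k-1)n+i}\circ f^n$ is rewritten (not bounded) via invariance and then handled by one more super-additivity step, as above. These are presentation issues only; the argument is sound.
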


\section{Proof of the main results}

\subsection{Proof of Theorem~\ref{hyperbolic1}}
We split the proof of the theorem into two steps.

 Let $M$ be a $d_{0}$-dimensional compact Riemannian manifold and $f:M\rightarrow M$ be a $C^{1+\alpha}$ diffeomorphism. For each $x\in M$, the following quantities
$$ \|D_xf\|=\sup_{0\neq u\in T_xM}\frac{\|D_xf(u)\|}{\|u\|}
\text{ and } m(D_xf)=\inf_{0\neq u\in T_xM}\frac{\|D_xf(u)\|}{\|u\|}
$$
are respectively called \emph{the maximal norm and minimum norm of the differentiable operator $D_xf: T_xM \to T_{fx}M$}, where $\|\cdot\|$ is the norm induced by the Riemannian metric on $M$.
Suppose $\Lambda\subset M$ is a hyperbolic set, and the diffeomorphism $f|_{\Lambda}$ possesses a $\{\lambda_{j}\}$-dominated splitting $T_{\Lambda}M=E_{1}\oplus E_{2}\oplus...\oplus E_{k}$ with $E_{1}\succeq E_{2}\succeq ...\succeq E_{k}$ and $\lambda_{1}>\lambda_{2}>...>\lambda_{\ell}>0>\lambda_{\ell+1}>...>\lambda_{k}$. Let $m_{j}=\text{dim} E_{j}$, denote $r_0=0$ and
$r_{j}=m_{1}+...+m_{j} \text{ for } j\in\{1,2,...,\ell\}$.
Set $u=m_{1}+...+m_{\ell}$.
%\subsection{Lower bound of the dimension of the local unstable leaf restricted on a hyperbolic set with dominated splitting}\label{lower}
For $s\in [0,u]$, $n\geq 1$ and $x\in \Lambda$, define
\begin{align}\label{star2}
\psi^{s}(x,f^{n})=\sum_{j=1}^{d}m_{j}\log\|D_{x}f^{n}|_{E_{j}}\|+(s-r_{d})\log\|D_{x}f^{n}|_{E_{d+1}}\|
\end{align}
if $r_{d}\leq s<r_{d+1}$ for some $d\in \{0,1,...,\ell-1\}$. Here we set $\sum_{j=1}^{0}m_{j}\log\|D_{x}f^{n}|_{E_{j}}\|=0$. It is clear that
\begin{equation}\label{star3}
\Psi_{f}(s)\triangleq \{-\psi^{s}(x,f^{n})\}_{n\geq 1}
\end{equation}
is super-additive.
%Denote $\hat{P}_{sup}(s)\triangleq P_{\text{var}}(f|_{\Lambda},\Psi_{f}(s))$. It is easy to see that $\hat{P}_{sup}(s)$ is continuous and strictly decreasing in $s$.

%The proof of the following result is a little modification of the corresponding result for repellers in \cite{caopesinzhao2019}.

\subsubsection{Lower bound of the Hausdorff dimension for the hyperbolic set $\Lambda$ with dominated splitting on the local unstable leaf}

Using the additive topological pressure of $-\psi^{s}(\cdot,f)$, we can give lower bound of the Hausdorff dimension for the hyperbolic set $\Lambda$ on the local unstable leaf as follows:

\begin{Lem}\label{lower bound 1}
Let $f:M\rightarrow M$ be a $C^{1+\alpha}$ diffeomorphism on a $d_{0}$-dimensional compact Riemannian manifold $M$, and $\Lambda\subset M$ be a hyperbolic set.
Assume that the diffeomorphism $f|_{\Lambda}$ possesses a $\{\lambda_{j}\}$-dominated splitting $T_{\Lambda}M=E_{1}\oplus E_{2}\oplus...\oplus E_{k}$ with $E_{1}\succeq E_{2}\succeq ...\succeq E_{k}$ and $\lambda_{1}>\lambda_{2}>...>\lambda_{\ell}>0>\lambda_{\ell+1}>...>\lambda_{k}$. Then there exists a constant $\bar{a}>0$ such that for any small $r>0$ and any $x\in \Lambda$, one has
$$\mu_x^u(B^u(x,r))\leq \bar{a}r^{s_1}$$
and
$$\text{dim}_{H}(\Lambda\cap W^{u}_{\text{loc}}(x, f))\geq s_{1},$$
where $s_{1}$ is the unique root of Bowen's equation $P(f|_{\Lambda},-\psi^{s}(\cdot,f))=0$, $\mu$ is the unique equilibrium state for $P(f|_{\Lambda},-\psi^{s_{1}}(\cdot,f))$, and $\{\mu_{x}^{u}\}_{x\in\Lambda}$ is the family of the conditional measures of $\mu$ on the local unstable leaves.
\end{Lem}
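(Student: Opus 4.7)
The strategy has three ingredients: Bowen's equation to identify $s_1$, the $u$-Gibbs property (Proposition~\ref{u-Gibbs}) to estimate conditional measures of cylinders, and the mass distribution principle (Proposition~\ref{dimension of measure_lower}) to convert a uniform measure-ball bound into a Hausdorff dimension lower bound. First I would verify that the map $s\mapsto P(f|_{\Lambda},-\psi^s(\cdot,f))$ is continuous and strictly decreasing on $[0,u]$, positive at $s=0$ (where it equals $h_{\text{top}}(f|_{\Lambda})$) and nonpositive at $s=u$ (by Ruelle's inequality combined with the variational principle applied to the additive potential $\psi^u$), which yields a unique root $s_1\in(0,u]$. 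Let $d\in\{0,\ldots,\ell-1\}$ be the integer with $r_d\leq s_1<r_{d+1}$. Because the $\{\lambda_j\}$-dominated splitting makes each $E_j$ (hence $\log\|D_xf|_{E_j}\|$) H\"older continuous on $\Lambda$, the potential $-\psi^{s_1}(\cdot,f)$ is H\"older, so Proposition~\ref{u-Gibbs} produces the unique equilibrium state $\mu$ together with its conditional measures $\{\mu_x^u\}$ satisfying the $u$-Gibbs property on cylinders.

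The heart of the proof is the uniform ball estimate $\mu_x^u(B^u(x,r))\leq\bar{a}\,r^{s_1}$ for $x\in\Lambda$ and small $r$. Fix a Markov partition $\mathcal{P}$ of sufficiently small diameter and, given $x$ and $r$, choose $n=n(x,r)$ with $\|D_xf^n|_{E_{d+1}}\|^{-1}\asymp r$; uniform hyperbolicity forces $n\asymp(-\log r)/\lambda_{d+1}$. The dominated splitting implies that the cylinder $W^u_{\text{loc}}(x,f)\cap P_{i_0\cdots i_{n-1}}$ containing $x$ is an anisotropic rectangle whose side in direction $E_j$ has length $\asymp\|D_xf^n|_{E_j}\|^{-1}$. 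By the choice of $n$ these sides are $\gtrsim r$ in the slow directions $E_{d+1},\ldots,E_\ell$ and $\lesssim r$ in the fast directions $E_1,\ldots,E_d$, so $B^u(x,r)$ is covered by at most $C\prod_{j=1}^{d}(r\|D_xf^n|_{E_j}\|)^{m_j}$ such cylinders. Applying Proposition~\ref{u-Gibbs} with $P(f|_{\Lambda},-\psi^{s_1}(\cdot,f))=0$, together with the sub-multiplicative inequality $\psi^{s_1}(x,f^n)\leq\sum_{k=0}^{n-1}\psi^{s_1}(f^kx,f)$ coming from $\|D_xf^n|_{E_j}\|\leq\prod_{k=0}^{n-1}\|D_{f^kx}f|_{E_j}\|$, each cylinder has $\mu_x^u$-measure at most $C'\prod_{j=1}^d\|D_xf^n|_{E_j}\|^{-m_j}\,\|D_xf^n|_{E_{d+1}}\|^{-(s_1-r_d)}$. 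Multiplying by the covering count cancels the $\prod_{j=1}^d\|D_xf^n|_{E_j}\|^{m_j}$ factors and leaves $\bar{a}\,r^{r_d}\,\|D_xf^n|_{E_{d+1}}\|^{-(s_1-r_d)}\asymp\bar{a}\,r^{r_d}\cdot r^{s_1-r_d}=\bar{a}\,r^{s_1}$. The mass distribution principle then yields $\dim_H(\Lambda\cap W^u_{\text{loc}}(x,f))\geq s_1$.

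The main obstacle is the geometric covering step: because the unstable direction carries multiple Lyapunov exponents, Markov cylinders are anisotropic while balls are isotropic, and a naive isotropic covering loses information. The particular form of $\psi^s$---a linear interpolation in $s$ between the singular-value functions at integer dimensions $r_d$ and $r_{d+1}$---is precisely what makes the covering count and the Gibbs estimate combine into a clean $r^{s_1}$ bound when $n$ is chosen so the $E_{d+1}$-side of the cylinder matches $r$. Subsidiary technical points are (i) using the $\{\lambda_j\}$-domination to produce partial-flag unstable subfoliations along which cylinders are approximately product rectangles and the counting of anisotropic covers makes sense, and (ii) the routine bounded-distortion comparison between $\log\|D_xf^n|_{E_j}\|$ and $\sum_{k=0}^{n-1}\log\|D_{f^kx}f|_{E_j}\|$; both are standard consequences of the H\"older continuity of the subbundles $E_j$ under domination.
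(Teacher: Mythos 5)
Your proposal shares the paper's main ingredients (the $u$-Gibbs property of the conditional measures, the anisotropic geometry of Markov cylinders along the unstable leaf, and the mass distribution principle), but it diverges at the crucial covering step, and the divergence introduces a genuine gap.

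You fix a single depth $n=n(x,r)$ chosen so that $\|D_xf^n|_{E_{d+1}}\|^{-1}\asymp r$, and you then propose to cover $B^u(x,r)$ by depth-$n$ cylinders and bound $\mu_x^u(B^u(x,r))$ by the number of such cylinders times a uniform per-cylinder Gibbs estimate. The problem is that depth-$n$ cylinders intersecting $B^u(x,r)$ are \emph{not} uniformly comparable in size. Domination only pins $\|D_yf^n|_{E_j}\|$ to the window $[C^{-1}e^{n(\lambda_j-\varepsilon)},\,Ce^{n(\lambda_j+\varepsilon)}]$, so for $y$ in different cylinders these quantities can differ by a factor of order $e^{2n\varepsilon}\asymp r^{-2\varepsilon/\lambda_{d+1}}$, which grows as $r\to0$. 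Your point (ii), the bounded-distortion comparison between $\log\|D_xf^n|_{E_j}\|$ and $\sum_{k}\log\|D_{f^kx}f|_{E_j}\|$, is a within-orbit comparison; it gives nothing across two different depth-$n$ cylinders, whose forward orbits separate long before time $n$. Consequently a depth-$n$ cylinder $C'$ intersecting $B^u(x,r)$ can have $E_{d+1}$-side much larger than $r$; its $\mu^u_x$-mass is then $\gg r^{s_1-r_d}$ times the fast-direction factors, and the product (count)$\times$(uniform per-cylinder bound) fails to give $r^{s_1}$. One already sees the difficulty in the conformal one-dimensional case, where depth-$n$ cylinders near a point can have lengths spread over many scales when the expansion rate is not constant; the naive fixed-$n$ estimate $\sum_i |C_i|^{s_1}\lesssim r^{s_1}$ simply does not follow from $\sum_i|C_i|\lesssim r$.

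The paper avoids this with a Moran cover rather than a fixed-depth cover: the family $\mathcal{Q}$ consists of cylinders $P_{\bar i_0 i_1\cdots i_{n-1}}$ whose depth $n$ is adjusted \emph{cylinder by cylinder} so that $a_1A_{d+1}(y,n)\in(br,r]$ for every $y$ in the cylinder, where $A_j(y,n)=\prod_{k=0}^{n-1}\|D_{f^k y}f|_{E_j}\|^{-1}$. This pins the $E_{d+1}$-side at $\asymp r$ uniformly across the family. The second crucial ingredient is a \emph{volume} argument rather than a counting argument: each Moran cylinder contains a rectangle of sides $\asymp a_1A_j(y,n)$ in the $E_j$-directions; its intersection with the enlarged ball $\tilde B$ therefore contains a rectangle of volume $\gtrsim r^{\,u-s_1}\,A_{d+1}^{\,s_1-r_d}\prod_{j\le d}A_j^{\,m_j}$; the rectangles for distinct cylinders are essentially disjoint, so summing their volumes against the volume of $\tilde B\asymp r^{u}$ gives $\sum_{\mathbf i\in\mathcal Q_1}A_{d+1}^{\,s_1-r_d}\prod_{j\le d}A_j^{\,m_j}\lesssim r^{s_1}$. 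This sum matches the $u$-Gibbs expression for $\mu^u_x(\text{cylinder})$ term by term (same products $A_j$, not operator norms $\|D_xf^n|_{E_j}\|$), so no count-times-uniform-bound step is needed and no across-cylinder comparison is ever invoked. To repair your proposal you would have to replace the fixed-depth cover by a Moran cover and replace the cylinder count by the inscribed-rectangle volume comparison, which is exactly what the paper does.
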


\begin{proof}
Let $\{P_{1},P_{2},...,P_{l}\}$ be a Markov partition of $\Lambda$. Since $\Lambda$ is locally maximal, there exists $\delta>0$ such that for each $i=1,2,...,l$, the closed $\delta$-neighborhood $\tilde{P}_{i}$ of $P_{i}$ satisfies $\tilde{P}_{i}\subset U$, where $U$ is an open neighborhood of $\Lambda$  in the definition of the locally maximal hyperbolic set, and $\tilde{P}_{i}\cap \tilde{P}_{j}=\varnothing$ whenever $P_{i}\cap P_{j}=\varnothing$. Given an admissible sequence $\mathbf{i}=\cdots i_{-m}\cdots i_{0}i_{1}\cdots i_{n-1}$, a point $x_{\mathbf{i}}\in\bigcap_{j=-\infty}^{n-1}f^{-j}(P_{i_{j}})$ and the cylinder $W^{u}_{\text{loc}}(x_{\mathbf{i}}, f) \cap P_{i_{0}i_{1}\cdots i_{n-1}}$, we denote by $W^{u}_{\text{loc}}(x_{\mathbf{i}}, f) \cap \tilde{P}_{i_{0}i_{1}\cdots i_{n-1}}$ the corresponding closed $\delta$-neighborhood of $W^{u}_{\text{loc}}(x_{\mathbf{i}}, f) \cap P_{i_{0}i_{1}\cdots i_{n-1}}$. Note that the $\{\lambda_{j}\}$-dominated splitting can be extended to $U$ and the map $x\mapsto E_{i}(x)$ is H$\ddot{\text{o}}$lder continuous on $\Lambda$ and also can be extended to $U$. So is the map $x\mapsto \|D_{x}f|_{E_{i}}\|$ for each $i=1,2,...,k.$

Since $\psi^{s_{1}}(\cdot,f)$ is H$\ddot{\text{o}}$lder continuous on $\Lambda$, there is a unique equilibrium state $\mu$ for $P(f|_{\Lambda},-\psi^{s_{1}}(\cdot,f))$, which is a Gibbs measure. Proposition ~\ref{u-Gibbs} tells us that the family of the conditional measures $\{\mu_{x}^{u}\}_{x\in\Lambda}$ of $\mu$ on the local unstable leaves have the $u$-Gibbs property for $-\psi^{s_{1}}(\cdot,f)$. For any $x\in\Lambda$, there is some $(\cdots\bar{i}_{-m}\cdots\bar{i}_{0}\cdots\bar{i}_{n}\cdots)\in\Sigma_{A}$ with  $\pi(\cdots\bar{i}_{-m}\cdots\bar{i}_{0}\cdots\bar{i}_{n}\cdots)=x$. Then there exists $K>0$ such that for each $n\in \mathbb{N}$ and each $y\in W^{u}_{\text{loc}}(x, f)\cap P_{\bar{i}_{0}\bar{i}_{1}\cdots\bar{i}_{n-1}}$, we have
\begin{eqnarray*}
 \begin{aligned}
K^{-1}\exp(-\sum_{i=0}^{n-1}\psi^{s_{1}}(f^{i}(y),f))&\leq \mu_{x}^{u}(W^{u}_{\text{loc}}(x, f)\cap P_{\bar{i}_{0}\bar{i}_{1}\cdots\bar{i}_{n-1}})\\
                                                      &\leq K \exp(-\sum_{i=0}^{n-1}\psi^{s_{1}}(f^{i}(y),f)).
 \end{aligned}
\end{eqnarray*}
Since $T_{\Lambda}M=E_{1}\oplus E_{2}\oplus...\oplus E_{k}$ is a dominated splitting, the angles between different subspaces $E_{i}$ are uniformly bounded away from zero. Therefore there exists $a>0$ such that for each admissible sequence $\mathbf{i}=\cdots i_{-m}\cdots i_{0}i_{1}\cdots i_{n-1}$, there exists $\xi_{i}\in W^{u}_{\text{loc}}(x_{\mathbf{i}}, f)\cap \tilde{P}_{i_{0}i_{1}\cdots i_{n-1}}$ for each $i=1,2,...,\ell$ with a rectangle of sides
\begin{align*}
\overbrace{a\|D_{\xi_{1}}f^{n}|_{E_{1}}\|^{-1},...,a\|D_{\xi_{1}}f^{n}|_{E_{1}}\|^{-1},}^{m_{1}}...,\overbrace{a\|D_{\xi_{\ell}}f^{n}|_{E_{\ell}}\|^{-1},...,a\|D_{\xi_{\ell}}f^{n}|_{E_{\ell}}\|^{-1}}^{m_{\ell}}
\end{align*}
contained in $W^{u}_{\text{loc}}(x_{\mathbf{i}}, f)\cap \tilde{P}_{i_{0}i_{1}\cdots i_{n-1}}$. Since $f$ is expanding along the unstable direction and the map $x\mapsto\|D_{x}f|_{E_{i}}\|^{-1}$  is H$\ddot{\text{o}}$lder continuous for $i=1,2,...,\ell$, there exists $C_{0}>0$ such that for any $W^{u}_{\text{loc}}(x_{\mathbf{i}}, f)\cap \tilde{P}_{i_{0}i_{1}\cdots i_{n-1}}$ and any $y, z \in W^{u}_{\text{loc}}(x_{\mathbf{i}}, f)\cap \tilde{P}_{i_{0}i_{1}\cdots i_{n-1}}$, any $i\in \{1,2,...,\ell\}$, we have
\begin{align*}
\frac{1}{C_{0}}\leq \frac{\prod_{j=0}^{n-1}\|D_{f^{j}(y)}f|_{E_{i}}\|^{-1}}{\prod_{j=0}^{n-1}\|D_{f^{j}(z)}f|_{E_{i}}\|^{-1}}\leq C_{0}.
\end{align*}
Notice that $\|D_{\xi_{i}}f^{n}|_{E_{i}}\|^{-1}\geq \prod_{j=0}^{n-1}\|D_{f^{j}(\xi_{i})}f|_{E_{i}}\|^{-1}$. Thus there exists $a_{1}>0$ and $y\in W^{u}_{\text{loc}}(x_{\mathbf{i}}, f)\cap P_{i_{0}i_{1}\cdots i_{n-1}}$ such that $W^{u}_{\text{loc}}(x_{\mathbf{i}}, f)\cap \tilde{P}_{i_{0}i_{1}\cdots i_{n-1}}$ contains a rectangle of sides
\begin{align*}
\overbrace{a_{1}A_{1}(y,n),...,a_{1}A_{1}(y,n),}^{m_{1}}\overbrace{a_{1}A_{2}(y,n),...,a_{1}A_{2}(y,n),}^{m_{2}}...,\\
\overbrace{a_{1}A_{\ell}(y,n),...,a_{1}A_{\ell}(y,n)}^{m_{\ell}},
\end{align*}
where $A_{i}(y,n)=\prod_{j=0}^{n-1}\|D_{f^{j}(y)}f|_{E_{i}}\|^{-1}$ for each $i\in \{1,2,...,\ell\}$.
Notice that there is $i\in \{1,2,...,\ell\}$ such that $r_{i}\leq s_{1}<r_{i+1}$. Fix $r>0$ small enough and set
\begin{align*}
\mathcal{Q}=\{\mathbf{i}=(\cdots\bar{i}_{-m}\cdots\bar{i}_{0}i_{1}\cdots i_{n-1}):a_{1}A_{i+1}(y,n)\leq r\ \text{for}\ \text{all}\ y \in W^{u}_{\text{loc}}(x, f)\cap P_{\bar{i}_{0}i_{1}\cdots i_{n-1}};\\
\text{but}~a_{1}A_{i+1}(z,n-1)>r~\text{for}~\text{some}~z \in W^{u}_{\text{loc}}(x, f)\cap P_{\bar{i}_{0}i_{1}\cdots i_{n-1}}\}.
\end{align*}
Therefore, for each $\mathbf{i}=(\cdots\bar{i}_{-m}\cdots\bar{i}_{0}i_{1}\cdots i_{n-1})\in \mathcal{Q}$, we have
\begin{align*}
br<a_{1}A_{i+1}(y,n)\leq r ~\text{for} ~\text{all}~y\in W^{u}_{\text{loc}}(x, f)\cap P_{\bar{i}_{0}i_{1}\cdots i_{n-1}},
\end{align*}
where $b=C_{0}^{-1}\min_{x\in \Lambda}\|D_{x}f\|^{-1}$.

Let $B\subset W^{u}_{\text{loc}}(x, f)$ be a ball of radius $r$ and $\tilde{B}\subset W^{u}_{\text{loc}}(x, f)$ a ball of radius $Cr$ with the same center as that of $B$, where $C$ is a constant only depended on the dimension of the unstable manifold. Put
\begin{align*}
\mathcal{Q}_{1}=\{\mathbf{i}\in \mathcal{Q}|P_{\mathbf{i}}\cap B\neq \varnothing\}.
\end{align*}
Hence, for each $\mathbf{i}\in\mathcal{Q}_{1}$, we have $\tilde{P}_{\mathbf{i}}\cap \tilde{B}$ contains a rectangle of sides
\begin{align*}
\overbrace{a_{1}A_{1}(y,n),...,a_{1}A_{1}(y,n)}^{m_{1}},...,\overbrace{a_{1}A_{i}(y,n),...,a_{1}A_{i}(y,n)}^{m_{i}},\\
\overbrace{a_{1}A_{i+1}(y,n),...,a_{1}A_{i+1}(y,n)}^{u-r_{i}}.
\end{align*}
It follows that
\begin{align*}
a_{1}^{u}A_{i+1}(y,n)^{u-r_{i}}A_{i}(y,n)^{m_{i}}...A_{1}(y,n)^{m_{1}}\leq vol (\tilde{P}_{\mathbf{i}}\cap \tilde{B}),
\end{align*}
where $vol$ denotes the volume induced on the local unstable leaf.

Since
\begin{align*}
A_{i+1}(y,n)^{u-r_{i}}&=A_{i+1}(y,n)^{s_{1}-r_{i}}A_{i+1}(y,n)^{u-s_{1}}\\
&\geq A_{i+1}(y,n)^{s_{1}-r_{i}}(\frac{b}{a_{1}})^{u-s_{1}}r^{u-s_{1}},
\end{align*}
we have
\begin{align*}
a_{2}r^{u-s_{1}}A_{i+1}(y,n)^{s_{1}-r_{i}}A_{i}(y,n)^{m_{i}}...A_{1}(y,n)^{m_{1}}\leq vol (\tilde{P}_{\mathbf{i}}\cap \tilde{B})
\end{align*}
for some constant $a_{2}>0$. Therefore,
\begin{align*}
\sum_{\mathbf{i}\in\mathcal{Q}_{1}}a_{2}r^{u-s_{1}}A_{i+1}(y,n)^{s_{1}-r_{i}}A_{i}(y,n)^{m_{i}}...A_{1}(y,n)^{m_{1}}\leq vol (\tilde{B})\leq C^{u}a_{3}r^{u}
\end{align*}
for some constant $a_{3}>0$. Hence,
\begin{align*}
\sum_{\mathbf{i}\in\mathcal{Q}_{1}}A_{i+1}(y,n)^{s_{1}-r_{i}}A_{i}(y,n)^{m_{i}}...A_{1}(y,n)^{m_{1}}\leq a_{4}r^{s_{1}}
\end{align*}
for some constant $a_{4}>0$.
On the other hand, we have
\begin{align*}
\mu_{x}^{u}(B)&\leq \sum_{(\cdots\bar{i}_{-m}\cdots\bar{i}_{0}i_{1}\cdots i_{n-1})\in\mathcal{Q}_{1}}\mu_{x}^{u}(W^{u}_{\text{loc}}(x, f)\cap P_{\bar{i}_{0}i_{1}\cdots i_{n-1}})\\
&\leq K \sum_{(\cdots\bar{i}_{-m}\cdots\bar{i}_{0}i_{1}\cdots i_{n-1})\in\mathcal{Q}_{1}} \exp(-\sum_{i=0}^{n-1}\psi^{s_{1}}(f^{i}(y),f))\\
&=K \sum_{(\cdots\bar{i}_{-m}\cdots\bar{i}_{0}i_{1}\cdots i_{n-1})\in\mathcal{Q}_{1}} A_{i+1}(y,n)^{s_{1}-r_{i}}A_{i}(y,n)^{m_{i}}...A_{1}(y,n)^{m_{1}}\\
&\leq a_{5}r^{s_{1}}
\end{align*}
for some constant $a_{5}>0$. The second inequality is due to the $u$-Gibbs property for $-\psi^{s_{1}}(\cdot,f)$ of $\mu_{x}^{u}$. This implies that $\text{dim}_{H}\mu_{x}^{u}\geq s_{1}$ and hence $\text{dim}_{H}(\Lambda\cap W^{u}_{\text{loc}}(x, f))\geq \dim_{H}\mu_{x}^{u}\geq s_{1}$.
\end{proof}
\iffalse
Given $x\in \Lambda$ and $n\geq 1$, we consider the differentiable operator $D_{x}f^{n}: T_{x}M\rightarrow T_{f^{n}(x)}M$ and denote its singular values in the decreasing order by
$$\alpha_{1}(x,f^{n})\geq\alpha_{2}(x,f^{n})\geq\cdots\geq \alpha_{u}(x,f^{n})\geq \alpha_{u+1}(x,f^{n})\geq\cdots\geq \alpha_{m_{0}}(x,f^{n}),$$ where $u$ is the dimension of the unstable manifold.
For $s\in [0,u]$, set
\begin{align*}
	\tilde{\psi}^{s}(x,f^{n})=\sum_{i=1}^{[s]}\log \alpha_{i}(x,f^{n})+(s-[s])\log \alpha_{[s]+1}(x,f^{n}).
\end{align*}
$\tilde{\Psi}_{f}(s)=\{-\tilde{\psi}^{s}(\cdot,f^{n})\}_{n\geq 0}$ are super-additive. Here we set $\sum_{i=1}^{0}\log \alpha_{i}(x,f^{n})=0$. We call them the {\it super-additive singular valued potentials}.
\fi

%With Proposition~\ref{relation} and Lemma~\ref{lower bound 1}, we can get a sharper lower bound of the dimension of the local unstable leaf restricted on a hyperbolic set with dominated splitting.

The following Lemma shows that the zero of the super-additive topological pressure of $\Psi_f(s)$ provides a sharper lower bound of the Hausdorff dimension for $\Lambda$ on the local unstable leaf.

\begin{Lem}\label{lower bound 2}
Let $f:M\rightarrow M$ be a $C^{1+\alpha}$ diffeomorphism on a $d_{0}$-dimensional compact Riemannian manifold $M$, and $\Lambda\subset M$ be a hyperbolic set.
Assume that the diffeomorphism $f|_{\Lambda}$ possesses a $\{\lambda_{j}\}$-dominated splitting $T_{\Lambda}M=E_{1}\oplus E_{2}\oplus...\oplus E_{k}$ with $E_{1}\succeq E_{2}\succeq ...\succeq E_{k}$ and $\lambda_{1}>\lambda_{2}>...>\lambda_{\ell}>0>\lambda_{\ell+1}>...>\lambda_{k}$. Then for any $x\in \Lambda$, we have
$$\text{dim}_{H}(\Lambda\cap W^{u}_{\text{loc}}(x, f))\geq s^{\ast},$$
where $s^{\ast}$ is the unique root of Bowen's equation $P_{\text{var}}(f|_{\Lambda},\Psi_{f}(s))=0$.
\end{Lem}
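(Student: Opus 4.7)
The plan is to apply Lemma~\ref{lower bound 1} to each iterate $f^n$ and then pass to the limit via Proposition~\ref{relation}, which expresses the super-additive pressure as a limit of additive pressures.

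First I would fix $n\ge 1$ and note that $\Lambda$ remains a hyperbolic set for $f^n$, with the same invariant splitting (now $\{n\lambda_j\}$-dominated) and with any Markov partition for $f$ still being Markov for $f^n$. Applying Lemma~\ref{lower bound 1} to $f^n|_\Lambda$ with the H\"older potential $-\psi^{s}(\cdot,f^n)$ produces the unique root $s_n$ of
\[
P(f^n|_\Lambda,-\psi^s(\cdot,f^n))=0
\]
together with an equilibrium state whose conditional measures satisfy $\mu^u_{n,x}(B^u(x,r))\le \bar a\,r^{s_n}$ for all small $r>0$ and all $x\in\Lambda$. The mass distribution principle (Proposition~\ref{dimension of measure_lower}), applied to this ball estimate in $W^u(x)$, already yields
\[
\dim_H(\Lambda\cap W^u_{\text{loc}}(x,f))\ge s_n.
\]

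It therefore suffices to prove $s_n\ge s^{\ast}$. Set $a_n(s):=\tfrac 1 n P(f^n|_\Lambda,-\psi^s(\cdot,f^n))$. Sub-additivity of $\{-\psi^s(\cdot,f^n)\}_{n\ge1}$ gives the pointwise bound $-\psi^s(x,f^{nk})\le\sum_{i=0}^{k-1}(-\psi^s)(f^{in}x,f^n)$; combined with monotonicity of the classical topological pressure and the standard identity $P(f^{nk},\sum_{i=0}^{k-1}\eta\circ f^{in})=kP(f^n,\eta)$, this yields $a_{nk}(s)\le a_n(s)$ for all $n,k\ge 1$. Since by Proposition~\ref{relation} we have $\lim_n a_n(s)=P_{\text{var}}(f|_\Lambda,\Psi_f(s))$, the inequality
\[
a_n(s)\ \ge\ P_{\text{var}}(f|_\Lambda,\Psi_f(s))
\]
follows for every $n\ge1$ and every $s\in[0,u]$. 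Evaluating at $s=s^{\ast}$ gives $a_n(s^{\ast})\ge 0=a_n(s_n)$, and the strict monotonicity of $s\mapsto a_n(s)$ then forces $s_n\ge s^{\ast}$.

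The step requiring the most care is this last one: translating the pressure inequality into the root comparison $s_n\ge s^{\ast}$ needs the strict monotonicity of $a_n(\cdot)$, which itself follows from the observation that $\log\|D_xf^n|_{E_{d+1}}\|>0$ on every unstable sub-bundle (for $d\le\ell-1$), so $-\psi^s(\cdot,f^n)$ strictly decreases pointwise as $s$ grows and the classical pressure strictly decreases along with it. Given this, one concludes $\dim_H(\Lambda\cap W^u_{\text{loc}}(x,f))\ge s_n\ge s^{\ast}$, as desired. The argument is clean because no actual convergence of $s_n$ to $s^{\ast}$ is needed, only the monotone inequality supplied by Fekete's lemma for the sub-additive sequence $\{-\psi^s(\cdot,f^n)\}$.
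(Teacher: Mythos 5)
Your first paragraph is correct and matches the paper's strategy: applying Lemma~\ref{lower bound 1} to $f^n|_\Lambda$ gives $\dim_H(\Lambda\cap W^u_{\text{loc}}(x,f))\geq s_n$ where $s_n$ is the root of $P(f^n|_\Lambda,-\psi^s(\cdot,f^n))=0$. The trouble is in the second paragraph, where you have the additivity inequality backwards. The paper defines $\Psi_f(s)=\{-\psi^s(\cdot,f^n)\}_{n\geq1}$ and observes that it is \emph{super}-additive, not sub-additive. Indeed $\|D_xf^{m+n}|_{E_j}\|\leq\|D_{f^nx}f^m|_{E_j}\|\cdot\|D_xf^n|_{E_j}\|$, and since the coefficients $m_j$ and $s-r_d$ are nonnegative, $\psi^s(\cdot,f^{m+n})\leq\psi^s(\cdot,f^n)+\psi^s(f^n\cdot,f^m)$; hence $-\psi^s(\cdot,f^n)$ is super-additive, and iterating gives
\begin{equation*}
-\psi^s(x,f^{nk})\ \geq\ \sum_{i=0}^{k-1}(-\psi^s)(f^{in}x,f^n),
\end{equation*}
the opposite of the pointwise bound you wrote. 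Feeding this into the monotonicity of the classical pressure together with the identity $P(f^{nk},\sum_{i=0}^{k-1}\eta\circ f^{in})=kP(f^n,\eta)$ produces $a_{nk}(s)\geq a_n(s)$, so $a_n(\cdot)$ is monotone \emph{increasing} along multiples; taking $k\to\infty$ via Proposition~\ref{relation} then yields $a_n(s)\leq P_{\text{var}}(f|_\Lambda,\Psi_f(s))$. Evaluating at $s=s^*$ gives $a_n(s^*)\leq 0=a_n(s_n)$, and by the strict monotonicity of $a_n(\cdot)$ in $s$ one concludes $s_n\leq s^*$, not $s_n\geq s^*$.

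Consequently the shortcut collapses: from $\dim_H(\Lambda\cap W^u_{\text{loc}}(x,f))\geq s_n$ and $s_n\leq s^*$ you cannot conclude the desired bound from a single $n$, and the convergence $s_n\to s^*$ really is needed. This is precisely the extra work the paper does: it restricts to the subsequence $f^{2^n}$, uses exactly the super-additivity you mis-signed to show $s_n\leq s_{n+1}$, deduces that $s^*:=\lim_n s_n$ exists, and then verifies via Proposition~\ref{relation} and continuity of $s\mapsto P_{\text{var}}(f|_\Lambda,\Psi_f(s))$ that this limit is the root of $P_{\text{var}}(f|_\Lambda,\Psi_f(s))=0$; the inequality $\dim_H(\Lambda\cap W^u_{\text{loc}}(x,f))\geq s_n$ then passes to the limit. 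A quick sanity check that the roots $s_n$ must approach $s^*$ \emph{from below}: the super-additive pressure is the supremum of the additive ones, so its root is the largest of the individual roots, never the smallest.
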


\begin{proof}
For each $n\in \mathbb{N}$, $\Lambda$ is also a hyperbolic set for $f^{2^{n}}$. Therefore $W^u_{\text{loc}}(x, f)=W^u_{\text{loc}}(x, f^{2^{n}})$ for $x\in\Lambda$. By Lemma \ref{lower bound 1}, for $x\in \Lambda$, we have $\text{dim}_{H}(\Lambda\cap W^{u}_{\text{loc}}(x, f))\geq s_{n}$, where $s_{n}$ is the unique root of the equation
\begin{align*}
P(f^{2^{n}}|_{\Lambda},-\psi^{s}(\cdot,f^{2^{n}}))=0.
\end{align*}

One can show that $s_{n}\leq s_{n+1}$. In detail, each $f^{2^{n}}$-invariant measure is also $f^{2^{n+1}}$-invariant. This together with the super-additivity of $\{-\psi^{s}(\cdot,f^{n})\}_{n\geq 1}$ yields that for any $f^{2^{n}}$-invariant measure $\mu$ supported on $\Lambda$, it holds that
\begin{align*}
\frac{1}{2^{n+1}}P(f^{2^{n+1}}|_{\Lambda},-\psi^{s}(\cdot,f^{2^{n+1}}))&\geq \frac{1}{2^{n+1}}\left(h_{\mu}(f^{2^{n+1}})-2\int \psi^{s}(x,f^{2^{n}})~d\mu\right)\\
&=\frac{1}{2^{n}}\left(h_{\mu}(f^{2^{n}})-\int \psi^{s}(x,f^{2^{n}})~d\mu\right).
\end{align*}
By the variational principle of the topological pressure of the additive potential, it holds that
\begin{align*}
\frac{1}{2^{n+1}}P(f^{2^{n+1}}|_{\Lambda},-\psi^{s}(\cdot,f^{2^{n+1}}))\geq \frac{1}{2^{n}}P(f^{2^{n}}|_{\Lambda},-\psi^{s}(\cdot,f^{2^{n}})).
\end{align*}
It follows that $s_{n}\leq s_{n+1}$. And hence the limit $s^{\ast}\triangleq \lim_{n\rightarrow \infty}s_{n}$ exists. Therefore $\text{dim}_{H}(\Lambda\cap W^{u}_{\text{loc}}(x, f))\geq s^{\ast}$ for $x\in \Lambda$.

Next we show that $s^{\ast}$ is the root of $P_{\text{var}}(f|_{\Lambda},\Psi_{f}(s))=0$.
By the continuity of $P_{\text{var}}(f|_{\Lambda},\Psi_{f}(s))$ with respect to $s$, it holds that $$P_{\text{var}}(f|_{\Lambda},\Psi_{f}(s^{\ast}))=\lim_{n\rightarrow \infty}P_{\text{var}}(f|_{\Lambda},\Psi_{f}(s_{n})).$$ By Proposition \ref{relation}, for each $n\in \mathbb{N}$, it holds that $$P_{\text{var}}(f|_{\Lambda},\Psi_{f}(s_{n}))=\lim_{k\rightarrow \infty}\frac{1}{2^{k}}P(f^{2^{k}}|_{\Lambda},-\psi^{s_{n}}(\cdot,f^{2^{k}})).$$ By the monotonicity of $\{s_{n}\}$,  for each  $n\in \mathbb{N}$, it holds that
$$\lim_{k\rightarrow \infty}\frac{1}{2^{k}}P(f^{2^{k}}|_{\Lambda},-\psi^{s_{n}}(\cdot,f^{2^{k}}))\geq 0.$$ It follows that $P_{\text{var}}(f|_{\Lambda},\Psi_{f}(s_{n}))\geq 0$ and hence $P_{\text{var}}(f|_{\Lambda},\Psi_{f}(s^{\ast}))\geq 0$.
On the other hand,
$$P_{\text{var}}(f|_{\Lambda},\Psi_{f}(s^{\ast}))=\lim_{k\rightarrow \infty}\frac{1}{2^{k}}P(f^{2^{k}}|_{\Lambda},-\psi^{s^{\ast}}(\cdot,f^{2^{k}}))\leq 0.$$
Thus $s^{\ast}$ is the root of the equation $P_{\text{var}}(f|_{\Lambda},\Psi_{f}(s))=0$.
This completes the proof of the lemma.
\end{proof}

\subsubsection{Proof of the theorem}

%The proof of the main result is inspired by the proof of  Theorem 3.1 in \cite{caopesinzhao2019}. Since we are dealing with the non-uniformly hyperbolic systems, we need some more delicate estimates in the proof.
%\begin{proof}

Let $\Gamma$ be the set of points which are regular in the sense of Oseledec \cite{Oseledets} with respect to the measure $\mu$. For  every $x\in\Gamma$, denote its distinct Lyapunov exponents by
\begin{align*}
	\lambda_{1}(\mu)>...>\lambda_{\ell}(\mu)>0>\lambda_{\ell+1}(\mu)>...>\lambda_{k}(\mu)
\end{align*}
with multiplicities $m_{1}, m_{2},...,m_{k}\geq 1$	and let
\begin{align*}
	T_{x}M=E_{1}(x)\oplus E_{2}(x)\oplus...\oplus E_{k}(x)
\end{align*}
be the corresponding decomposition of its tangent space, where $0<k\leq d_{0}$. Denote $r_0=0$ and $r_j=m_1+m_2+...+m_j$ for $j=1,2,...,k$. Recall $\psi^s(\cdot, f^n)$ and $\Psi_f(s)$ as (\ref{star2}) and (\ref{star3}) respectively. Let $\lambda_j'(\mu)=\lambda_{d+1}(\mu)$ if $r_d\leq j<r_{d+1}$ for $d=\{0,1,...,k-1\}$, and $\lambda'_{d_0}(\mu)=\lambda_{k}(\mu)$.

Since $\mu$ is an ergodic SRB measure for $f$, by Pesin's entropy formula ~\cite{LS}, it holds that
\begin{equation}\label{positive entropy}
h_{\mu}(f)-\sum^{\ell}_{i=1} m_i \lambda_{i}(\mu)=0.
\end{equation}
Therefore, we obtain $h_{\mu}(f)>0$. Set $u=m_1+m_2+...+m_\ell$. Fixed $0<\varepsilon<\frac{\lambda_{\ell}(\mu)}{2(3u+1)}$ and a neighborhood $\mathcal{V}_{\varepsilon}$ of $\mu$, by Theorem~\ref{Katok}, there exists a hyperbolic set $\Lambda_{\varepsilon}$ such that
%There exists an integer $m$ such that $m\leq s_{\ast}< m+1$. Notice that
%\begin{align*}
%P_{\text{var}}(f|_{\Lambda},\tilde{\Psi}_{f}(s_{\ast}))&=\sup_{\mu\in \mathcal{M}(f|_{\Lambda})}\{h_{\mu}(f)-\lim_{n\rightarrow \infty}\frac{1}{n}\int \tilde{\psi}^{s_{\ast}}(x,f^{n})~d\mu\}\\
%&=\sup_{\mu\in \mathcal{M}(f|_{\Lambda})}\{h_{\mu}(f)-(\lambda_{1}(\mu)+...+\lambda_{m}(\mu)+(s_{\ast}-m)\lambda_{m+1}(\mu))\},
%\end{align*}
%where $\lambda_{1}(\mu)\geq ...\geq \lambda_{m_{0}}(\mu)$ are the Lyapunov exponents of $\mu$. By definition,  for any $\varepsilon>0$, there exists an ergodic measure $\mu$ supported on $\Lambda$ such that
%\begin{align*}
%P_{\text{var}}(f|_{\Lambda},\tilde{\Psi}_{f}(s_{\ast}))<h_{\mu}(f)-(\lambda_{1}(\mu)+...+\lambda_{m}(\mu)+(s_{\ast}-m)\lambda_{m+1}(\mu))+\varepsilon.
%\end{align*}
%By the hyperbolicity, we have $h_{\mu}(f)>0$. Fix $\varepsilon>0$, by Theorem \ref{Katok}, there exists a hyperbolic set $\Lambda_{\varepsilon}\subset \Lambda$ such that
\begin{enumerate}[(i)]
\item $\Lambda_{\varepsilon}$ admits a $\{\lambda_{j}(\mu)\}$-dominated splitting $T_{\Lambda_{\varepsilon}}M=E_{1}\oplus E_{2}\oplus...\oplus E_{k}$;
\item $h_{\text{\text{top}}}(f|_{\Lambda_{\varepsilon}})\geq h_{\mu}(f)-\varepsilon$;
\item $|\lambda_{j}'(\mu)-\lambda_{j}(\nu)|\leq \varepsilon$ for each ergodic Borel probability measure $\nu$ supported on $\Lambda_{\varepsilon}$ and $j=1,2,...,d_0$;
\item any $f$-invariant Borel probability measure $\nu$ supported on $\Lambda_{\varepsilon}$ lies in $\mathcal{V}_{\varepsilon}$.
\end{enumerate}
Since $f$ is hyperbolic on $\Lambda_{\varepsilon}$, there exists an ergodic $f$-invariant probability measure $\nu$ supported on $\Lambda_{\varepsilon}$ such that
\begin{equation}\label{star4}
h_{\nu}(f|_{\Lambda_{\varepsilon}})=h_{\text{top}}(f|_{\Lambda_{\varepsilon}}).
\end{equation}
Denote $\nu$ by $\nu_{\varepsilon}$ if we need to emphasize the dependence on $\varepsilon$.

It follows from Lemma \ref{lower bound 2} and the monotonicity of the Hausdorff dimension that
$$s_{\varepsilon}\leq \text{dim}_{H}(\Lambda_{\varepsilon}\cap W^{u}_{\text{loc}}(x, f))\leq u$$
for $x\in \Lambda_{\varepsilon}$, where $s_{\varepsilon}$ is the unique root of the equation $P_{\text{var}}(f|_{\Lambda_{\varepsilon}},\Psi_{f}(s))=0.$ %There exists $d\in \{0,1,...,\ell-1\}$ such that $r_{d}\leq s_{\varepsilon}< r_{d+1}$.
If $s_\varepsilon=u$, then the proof of the theorem is complete.
If $s_\varepsilon<u$, then there is some $d\in \{0,1,...,\ell-1\}$ such that  $r_{d}\leq s_{\varepsilon}<r_{d+1}$. By (i) and (iii), one has
\begin{align*}
	&\lim_{n\rightarrow \infty}\frac{1}{n}\int\psi^{s_{\varepsilon}}(x,f^{n})~d\nu\\
	=&\sum_{j=1}^{d}m_{j}\lim_{n\rightarrow \infty}\frac{1}{n}\int\log\|D_{x}f^{n}|_{E_{j}}\|~d\nu+(s_{\varepsilon}-r_{d})\lim_{n\rightarrow \infty}\frac{1}{n}\int\log\|D_{x}f^{n}|_{E_{d+1}}\|~d\nu\\
	\leq &\sum_{j=1}^{d}m_{j}\lambda_{j}(\mu)+(s_{\varepsilon}-r_{d})\lambda_{d+1}(\mu)+s_{\varepsilon}\varepsilon\\
	\leq &\sum_{i=1}^{[s_{\varepsilon}]}\lambda_{i}(\nu)+(s_{\varepsilon}-[s_{\varepsilon}])\lambda_{[s_{\varepsilon}]+1}(\nu)+2s_{\varepsilon}\varepsilon.
\end{align*}
%Here $\lambda_1(\nu)\geq\lambda_2(\nu)\geq...\geq\lambda_{d_0}(\nu)$ be the Lyapunov exponents of $\nu$
%With this and the definition of the super-additive pressure, we have
Therefore
\begin{align}
0 =& P_{var}(f|_{\Lambda_\varepsilon}, \Psi_f(s_\varepsilon)) \nonumber \\
 \geq &  h_{\nu}(f|_{\Lambda_{\varepsilon}})-\lim_{n\rightarrow \infty}\frac{1}{n}\int \psi^{s_{\varepsilon}}(x,f^{n})~d\nu \nonumber   \\
 \geq & h_{\nu}(f|_{\Lambda_{\varepsilon}})-\sum_{i=1}^{[s_{\varepsilon}]}\lambda_{i}(\nu)-(s_{\varepsilon}-[s_{\varepsilon}])\lambda_{[s_{\varepsilon}]+1}(\nu)-2s_{\varepsilon}\varepsilon. \label{entropy2}
\end{align}
Then it follows from (ii), (iii), (\ref{star4}) and (\ref{entropy2}) that
\begin{align*}
\sum^{\ell}_{i=1} m_i \lambda_{i}(\mu)&=h_{\mu}(f)\\
&\leq h_{\nu}(f|_{\Lambda_{\varepsilon}})+\varepsilon\\
&\leq \sum_{i=1}^{[s_\varepsilon]}\lambda_{i}(\nu)+(s_{\varepsilon}-[s_\varepsilon])\lambda_{[s_{\varepsilon}]+1}(\nu)+2s_{\varepsilon}\varepsilon+\varepsilon\\
&\leq \sum_{i=1}^{d+1}m_i\lambda_{i}(\mu)-(r_{d+1}-s_{\varepsilon})\lambda_{d+1}(\mu)+3s_{\varepsilon}\varepsilon+\varepsilon\\
&< \sum_{i=1}^{d+1}m_i\lambda_{i}(\mu)-(r_{d+1}-s_{\varepsilon})\lambda_{d+1}(\mu)+\frac12\lambda_{\ell}(\mu).
\end{align*}
Therefore $r_{\ell-1}\leq s_{\varepsilon}<r_{\ell}$.
By Lemma~\ref{lower bound 2}, (\ref{entropy2}), (ii), (iii), (\ref{star4}) and (\ref{positive entropy}), one has
\begin{align}
\text{dim}_{H} (\Lambda_{\varepsilon}\cap W^{u}_{\text{loc}}(x, f))&\geq s_{\varepsilon} \nonumber \\
&\geq \frac{h_{\nu}(f)-\sum_{i=1}^{[s_{\varepsilon}]}\lambda_{i}(\nu)+[s_{\varepsilon}]\lambda_{[s_{\varepsilon}]+1}(\nu)-2u\varepsilon}{\lambda_{[s_{\varepsilon}]+1}(\nu)} \nonumber \\
&\geq \frac{h_{\mu}(f)-\varepsilon-\sum_{i=1}^{\ell-1}m_i\lambda_{i}(\mu)+r_{\ell-1}\lambda_{\ell}(\mu)-4u\varepsilon}{\lambda_{\ell}(\mu)+\varepsilon} \nonumber \\
&=\frac{u\lambda_{\ell}(\mu)-(4u+1)\varepsilon}{\lambda_{\ell}(\mu)+\varepsilon} \nonumber \\
&=u-\frac{(5u+1)\varepsilon}{\lambda_{\ell}(\mu)+\varepsilon}, \label{lowerbofroot}
\end{align}
here the second inequality is by (\ref{entropy2}), and the third inequality is by (ii), (iii) and (\ref{star4}).
Hence
$$u\geq \text{dim}_{H}(\Lambda_{\varepsilon}\cap W^{u}_{\text{loc}}(x, f)) \geq u-\frac{(5u+1)\varepsilon}{\lambda_{\ell}(\mu)+\varepsilon}$$
for $x\in \Lambda_{\varepsilon}$. Letting $\varepsilon$ tend to zero and shrinking $\mathcal{V}_{\varepsilon}$, we have that
$$\lim_{\varepsilon\rightarrow 0}\text{dim}_{H} (\Lambda_{\varepsilon}\cap W^{u}_{\text{loc}}(x, f))=u$$
uniformly for $x\in \Lambda_{\varepsilon}$ and $\nu_{\varepsilon}\rightarrow \mu$ in the weak$^{\ast}$ topology.
This completes the proof of Theorem \ref{hyperbolic1}.

%If $s_{\varepsilon}=u$, then the result follows from Lemma~\ref{lower bound 2} directly.

%If $s_{\varepsilon}<u$, then $[s_{\varepsilon}]=u-1$.

%\end{proof}

%\iffalse
%With a little modification of the proof, one can prove the following result.
%\begin{Cor}\label{lower bound for hyperbolic}
%Let $f: M\rightarrow M$ be a $C^{1+\alpha}$ diffeomorphism preserving a hyperbolic set $\Lambda$.  Then for any $\varepsilon>0$, there exists a hyperbolic set $\Lambda_{\varepsilon}\subset \Lambda$ and a constant $C>0$ such that for any $x\in \Lambda_{\varepsilon}$, we have
%\begin{align*}
%\text{dim}_{H}\Lambda_{\varepsilon}\cap W^{u}_{\text{loc}}(x)\geq s_{\ast}-C\varepsilon,
%\end{align*}
%where $s_{\ast}$ is the unique root of the equation $P_{\text{var}}(f|_{\Lambda},\tilde{\Psi}_{f}(s))=0$.
%\end{Cor}
%\fi

\subsection{Upper bound of the Hausdorff dimension for the hyperbolic set $\Lambda$ with dominated splitting on the local unstable leaf.}\label{upper}
 In this subsection, we still assume that the diffeomorphism $f|_{\Lambda}$ possesses a $\{\lambda_{j}\}$-dominated splitting as that in Lemma \ref{lower bound 1}. The following lemma is prepared for Section $3.3$.

Let $m_{j}=\text{dim} E_{j}$, $r^{\prime}_{j}=m_{\ell}+...+m_{\ell-j+1}$ for $j\in\{1,2,...,\ell\}$ and $r^{\prime}_{0}=0$. Set $u=m_{1}+...+m_{\ell}$. For $t\in [0,u]$, $n\geq 1$ and $x\in \Lambda$, define
\begin{align*}
	\hat{\psi}^{t}(x,f^{n})=\sum_{j=\ell-d+1}^{\ell}m_{j}\log m(D_{x}f^{n}|_{E_{j}})+(t-r^{\prime}_{d})\log m(D_{x}f^{n}|_{E_{\ell-d}})
\end{align*}
if $r^{\prime}_{d}\leq t< r^{\prime}_{d+1}$ for some $d\in \{0,1,...,\ell-1\}$. Here we set $\sum_{j=\ell+1}^{\ell}m_{j}\log m(D_{x}f^{n}|_{E_{j}})=0$ and $\hat{\psi}^u(x, f^n)=\sum_{j=1}^\ell m_j \log m(D_xf^n|_{E_j})$. %It is clear that $\hat{\Psi}_{f}(t)\triangleq \{-\hat{\psi}^{t}(x,f^{n})\}_{n\geq 1}$ is sub-additive.
%Denote $\hat{P}_{sup}(s)\triangleq P_{\text{var}}(f|_{\Lambda},\Psi_{f}(s))$. It is easy to see that $\hat{P}_{sup}(s)$ is continuous and strictly decreasing in $s$.

\begin{Lem}\label{upper bound 1}
Let $f:M\rightarrow M$ be a $C^{1+\alpha}$ diffeomorphism on a $d_{0}$-dimensional compact Riemannian manifold $M$, and $\Lambda\subset M$ be a hyperbolic set.
Assume that the diffeomorphism $f|_{\Lambda}$ possesses a $\{\lambda_{j}\}$-dominated splitting $T_{\Lambda}M=E_{1}\oplus E_{2}\oplus...\oplus E_{k}$ with $E_{1}\succeq E_{2}\succeq ...\succeq E_{k}$ and $\lambda_{1}>\lambda_{2}>...>\lambda_{\ell}>0>\lambda_{\ell+1}>...>\lambda_{k}$. Then there exists a constant $\tilde{a}>0$ such that for any $r>0$ small and any $x\in \Lambda$, we have
$$\mu_{x}^{u}(B^{u}(x,r))\geq \tilde{a}r^{t_{1}}$$
	and $$\text{dim}_{H}(\Lambda\cap W^{u}_{\text{loc}}(x,f))\leq t_{1},$$
	where $t_{1}$ is the unique root of Bowen's equation $P(f|_{\Lambda},-\hat{\psi}^{t}(\cdot,f))=0$, $\mu$ is the unique equilibrium state for $P(f|_{\Lambda},-\hat{\psi}^{t_{1}}(\cdot,f))$, and $\{\mu_{x}^{u}\}_{x\in\Lambda}$ is the family of the conditional measures of $\mu$ on the local unstable leaves.
\end{Lem}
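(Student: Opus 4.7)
The plan is to mirror the proof of Lemma~\ref{lower bound 1}, interchanging the roles of the maximum norm $\|\cdot\|$ and the minimum norm $m(\cdot)$, and replacing ``rectangle contained in the cylinder'' by ``rectangle containing the cylinder''. Concretely, I would fix a small-diameter Markov partition $\{P_{1},\ldots,P_{l}\}$ of $\Lambda$ and extend the dominated splitting together with the maps $x\mapsto E_{j}(x)$ and $x\mapsto m(D_{x}f|_{E_{j}})$ H\"older continuously to a neighborhood of $\Lambda$. Since $\hat{\psi}^{t_{1}}(\cdot, f)$ is then H\"older continuous, Proposition~\ref{u-Gibbs} supplies the unique equilibrium state $\mu$ for $-\hat{\psi}^{t_{1}}(\cdot, f)$ and a constant $K>0$ such that for every $x\in\Lambda$ with coding $(\cdots \bar{i}_{0} \bar{i}_{1}\cdots)$, every $n\geq 1$, and every $y \in W^{u}_{\text{loc}}(x, f)\cap P_{\bar{i}_{0}\cdots\bar{i}_{n-1}}$,
\begin{equation*}
K^{-1}\exp\!\Bigl(-\!\sum_{i=0}^{n-1}\hat{\psi}^{t_{1}}(f^{i}y, f)\Bigr) \,\leq\, \mu_{x}^{u}\!\left(W^{u}_{\text{loc}}(x, f)\cap P_{\bar{i}_{0}\cdots\bar{i}_{n-1}}\right) \,\leq\, K\exp\!\Bigl(-\!\sum_{i=0}^{n-1}\hat{\psi}^{t_{1}}(f^{i}y, f)\Bigr).
\end{equation*}

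Next, dualizing the geometric input of Lemma~\ref{lower bound 1}, the dominated splitting and bounded distortion provide a constant $C_{0}>0$ so that each enlarged cylinder $W^{u}_{\text{loc}}(x, f) \cap \tilde{P}_{\bar{i}_{0}\cdots\bar{i}_{n-1}}$ is \emph{contained in} a rectangle of side $C_{0}B_{j}(y, n)$ in direction $E_{j}$, where $B_{j}(y, n)=\prod_{i=0}^{n-1}m(D_{f^{i}y}f|_{E_{j}})^{-1}$. Given $x \in \Lambda$ and small $r>0$, I would then define $n=n(x, r)$ to be the smallest integer such that $C_{0}B_{\ell}(y, n)\leq r$. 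Because the dominated splitting forces $B_{1}(y, n)\leq B_{2}(y, n)\leq\cdots\leq B_{\ell}(y, n)$, every side of the containing rectangle is at most $r$; the cylinder $W^{u}_{\text{loc}}(x, f)\cap P_{\bar{i}_{0}\cdots\bar{i}_{n-1}}$ therefore lies inside $B^{u}(x, r)$, and the $u$-Gibbs bound gives $\mu_{x}^{u}(B^{u}(x, r)) \geq K^{-1}\exp(-\hat{\psi}^{t_{1}}(y, f^{n}))$ up to bounded distortion.

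To finish I would unpack $\exp(-\hat{\psi}^{t_{1}}(y, f^{n}))$ via its combinatorial definition: writing $d$ for the unique index with $r^{\prime}_{d}\leq t_{1}<r^{\prime}_{d+1}$, it equals $\prod_{j=\ell-d+1}^{\ell}B_{j}(y, n)^{m_{j}}\cdot B_{\ell-d}(y, n)^{t_{1}-r^{\prime}_{d}}$. Combining the scaling $r\asymp B_{\ell}(y, n)$ coming from the choice of $n$ with the dominated-splitting comparisons between the $B_{j}(y, n)$'s, I would bound this product from below by $\tilde{a}_{0}r^{t_{1}}$ for a constant $\tilde{a}_{0}>0$ independent of $x$ and $r$. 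The desired inequality $\mu_{x}^{u}(B^{u}(x, r))\geq\tilde{a}r^{t_{1}}$ with $\tilde{a}=K^{-1}\tilde{a}_{0}$ then follows, and Proposition~\ref{dimension of measure_upper} yields $\dim_{H}(\Lambda\cap W^{u}_{\text{loc}}(x, f))\leq t_{1}$.

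The step I expect to be the main obstacle is the final algebraic comparison. In the non-conformal setting the quantities $B_{j}(y, n)$ for different $j$ can differ by exponentially-in-$n$ factors, so the lower bound $\prod_{j=\ell-d+1}^{\ell}B_{j}^{m_{j}}\cdot B_{\ell-d}^{t_{1}-r^{\prime}_{d}}\geq \tilde{a}_{0}r^{t_{1}}$ is not immediate from the crude estimates $B_{j}\leq B_{\ell}$ alone; one has to track the combinatorial block structure of $\hat{\psi}^{t_{1}}$ very carefully and invoke the Bowen equation $P(f|_{\Lambda}, -\hat{\psi}^{t_{1}})=0$ (through the specific choice of $t_{1}$) to close the estimate. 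This is where the upper-bound lemma genuinely requires more than a mechanical dualization of Lemma~\ref{lower bound 1}, and accordingly it is the content of the lemma whose verification is intended to be used in the sequel.
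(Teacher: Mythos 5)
Your single-cylinder strategy, as you suspect at the end of your proposal, does not close, and the gap cannot be patched by invoking the Bowen equation. Choosing $n$ so that $B_\ell(y,n)\asymp r$ makes every side of the containing rectangle at most $r$, but it also makes the Gibbs weight of that cylinder too small: writing $d$ for the block index with $r'_d\leq t_1<r'_{d+1}$, one has
\[
\exp\bigl(-\hat{\psi}^{t_1}(y,f^n)\bigr)\asymp B_{\ell-d}(y,n)^{t_1-r'_d}\prod_{j=\ell-d+1}^{\ell}B_j(y,n)^{m_j},
\]
and since $B_j(y,n)\leq B_\ell(y,n)\asymp r$ for all $j\leq\ell$, with exponential gaps in the genuinely nonconformal case, this product is typically $\ll r^{t_1}$. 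For example with $\ell=2$, $m_1=m_2=1$, $1\leq t_1<2$ and $\lambda_1>\lambda_2>0$, the weight is $\asymp r^{1+(\lambda_1/\lambda_2)(t_1-1)}$, whose exponent strictly exceeds $t_1$. Hence $\mu^u_x(B^u(x,r))\geq\mu^u_x(\text{cylinder})$ only gives $\mu^u_x(B^u(x,r))\geq\tilde{a}\,r^{\tau}$ for some $\tau>t_1$, which yields an upper bound on dimension strictly above $t_1$ and is useless for the theorem. The pressure identity $P(f|_{\Lambda},-\hat{\psi}^{t_1}(\cdot,f))=0$ is a global constraint; it cannot upgrade a pointwise estimate on one cylinder.

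The paper's proof differs precisely at this step: it does \emph{not} try to fit a single cylinder inside $B^u(x,r)$. The stopping time is set by the intermediate direction $E_{\ell-i}$ (with $r'_i\leq t_1<r'_{i+1}$), via $\hat{a}_1 A_{\ell-i}(y,n)\leq r$, so the cylinders in the stopping family $\hat{\mathcal{Q}}$ may be larger than $r$ in the slower directions $E_j$, $j>\ell-i$, and stick out of the ball. The slack is recovered by a covering/volume-counting argument: collect the cylinders $\hat{\mathcal{Q}}_1$ meeting $B$, bound the Lebesgue volume of each $\tilde{P}_{\mathbf{i}}\cap\tilde{B}$ by that of a containing rectangle, use the factorization $A_{\ell-i}^{u-r'_i}=A_{\ell-i}^{t_1-r'_i}A_{\ell-i}^{u-t_1}\leq A_{\ell-i}^{t_1-r'_i}(r/\hat{a}_1)^{u-t_1}$, and compare $\sum_{\mathbf{i}\in\hat{\mathcal{Q}}_1}\mathrm{vol}(\text{rectangle})\geq\mathrm{vol}(\tilde{B})\asymp r^u$ to force $\sum_{\mathbf{i}\in\hat{\mathcal{Q}}_1}A_{\ell-i}^{t_1-r'_i}\prod_{j>\ell-i}A_j^{m_j}\gtrsim r^{t_1}$. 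Summing the $u$-Gibbs lower bounds over $\hat{\mathcal{Q}}_1$ then gives $\mu^u_x(\tilde{B})\gtrsim r^{t_1}$. This Moran-cover summation, not any refinement of a one-cylinder bound, is what makes the lower bound $\mu^u_x(B^u(x,r))\geq\tilde a\,r^{t_1}$ attainable, and it is the step your proposal would have to be rebuilt around.
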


\begin{proof}
	Given a Markov partition $\{P_{1},P_{2},...,P_{l}\}$ of $\Lambda$ and $\delta>0$ as that in the proof of Lemma~\ref{lower bound 1}. We still denote by $\tilde{P}_{i}$ the $\delta$-neighborhood of $P_{i}$ as that in the proof of Lemma~\ref{lower bound 1}. Note that the map $x\mapsto m(D_{x}f|_{E_{i}})$ can be extended to $U$ as a H$\ddot{\text{o}}$lder continuous map for each $i=1,2,...,k,$ where $U$ is a neighborhood of $\Lambda$ from the definition of the locally maximal hyperbolic set.

	Since $\hat{\psi}^{t_{1}}(\cdot,f)$ is H$\ddot{\text{o}}$lder continuous on $\Lambda$, there is a unique equilibrium state $\mu$ for $P(f|_{\Lambda},-\hat{\psi}^{t_{1}}(\cdot,f))$, which is a Gibbs measure. Proposition ~\ref{u-Gibbs} tells us that the family of the conditional measures $\{\mu_{x}^{u}\}_{x\in\Lambda}$ of $\mu$ on the local unstable leaves have the $u$-Gibbs property for $-\hat{\psi}^{t_{1}}(\cdot,f)$. For any $x\in\Lambda$, there is some $(\cdots\bar{i}_{-m}\cdots\bar{i}_{0}\cdots\bar{i}_{n}\cdots)\in\Sigma_{A}$ with  $\pi(\cdots\bar{i}_{-m}\cdots\bar{i}_{0}\cdots\bar{i}_{n}\cdots)=x$. Then there exists $\hat{K}>0$ such that for each $n\in \mathbb{N}$ and each $y\in W^{u}_{\text{loc}}(x,f)\cap P_{\bar{i}_{0}\bar{i}_{1}\cdots\bar{i}_{n-1}}$, we have
\begin{eqnarray*}
 \begin{aligned}
	\hat{K}^{-1}\exp(-\sum_{i=0}^{n-1}\hat{\psi}^{t_{1}}(f^{i}(y),f))&\leq \mu_{x}^{u}(W^{u}_{\text{loc}}(x,f)\cap P_{\bar{i}_{0}\bar{i}_{1}\cdots\bar{i}_{n-1}})\\
                                                                     &\leq \hat{K} \exp(-\sum_{i=0}^{n-1}\hat{\psi}^{t_{1}}(f^{i}(y),f)).
 \end{aligned}
	\end{eqnarray*}
Given an admissible sequence $\mathbf{i}=\cdots i_{-m}\cdots i_{0}i_{1}\cdots i_{n-1}$ and any $x_{\mathbf{i}}\in\bigcap_{j=-\infty}^{n-1}f^{-j}(P_{i_{j}})$, we denote by $W^{u}_{\text{loc}}(x_{\mathbf{i}}, f) \cap \tilde{P}_{i_{0}i_{1}\cdots i_{n-1}}$ the corresponding closed $\delta$-neighborhood of $W^{u}_{\text{loc}}(x_{\mathbf{i}}, f) \cap P_{i_{0}i_{1}\cdots i_{n-1}}$.	
Since $T_{\Lambda}M=E_{1}\oplus E_{2}\oplus...\oplus E_{k}$ is a dominated splitting, the angles between different subspaces $E_{i}$ are uniformly bounded away from zero. Therefore there exists $\hat{a}>0$ such that for each admissible sequence $\mathbf{i}=\cdots i_{-m}\cdots i_{0}i_{1}\cdots i_{n-1}$, there exists $\xi_{i}\in W^{u}_{\text{loc}}(x_{\mathbf{i}},f)\cap \tilde{P}_{i_{0}i_{1}\cdots i_{n-1}}$ for each $i=1,2,...,\ell$,  $W^{u}_{\text{loc}}(x_{\mathbf{i}},f)\cap \tilde{P}_{i_{0}i_{1}\cdots i_{n-1}}$ is contained in a rectangle of sides
	\begin{align*}
		\overbrace{\hat{a}\times m(D_{\xi_{1}}f^{n}|_{E_{1}})^{-1},...,\hat{a}\times  m(D_{\xi_{1}}f^{n}|_{E_{1}})^{-1},}^{m_{1}}...,\\
		\overbrace{\hat{a}\times  m(D_{\xi_{\ell}}f^{n}|_{E_{\ell}})^{-1},...,\hat{a}\times  m(D_{\xi_{\ell}}f^{n}|_{E_{\ell}})^{-1}}^{m_{\ell}}.
	\end{align*}
	Since $f$ is expanding along the unstable direction and the map $x\mapsto m(D_{x}f|_{E_{i}})^{-1}$  is H$\ddot{\text{o}}$lder continuous for $i=1,2,...,\ell$, then there exists $\hat{C}_{0}>0$ such that for any $W^{u}_{\text{loc}}(x_{\mathbf{i}},f)\cap \tilde{P}_{i_{0}i_{1}\cdots i_{n-1}}$ and any $y, z \in W^{u}_{\text{loc}}(x_{\mathbf{i}},f)\cap \tilde{P}_{i_{0}i_{1}\cdots i_{n-1}}$, any $i\in \{1,2,...,\ell\}$, we have
	\begin{align*}
		\frac{1}{\hat{C}_{0}}\leq \frac{\prod_{j=0}^{n-1}m(D_{f^{j}(y)}f|_{E_{i}})^{-1}}{\prod_{j=0}^{n-1}m(D_{f^{j}(z)}f|_{E_{i}})^{-1}}\leq \hat{C}_{0}.
	\end{align*}
	Notice that $m(D_{\xi_{i}}f^{n}|_{E_{i}})^{-1}\leq \prod_{j=0}^{n-1}m(D_{f^{j}(\xi_{i})}f|_{E_{i}})^{-1}$. Thus there exists $\hat{a}_{1}>0$ and $y\in W^{u}_{\text{loc}}(x_{\mathbf{i}},f)\cap P_{i_{0}i_{1}\cdots i_{n-1}}$ such that $W^{u}_{\text{loc}}(x_{\mathbf{i}},f)\cap \tilde{P}_{i_{0}i_{1}\cdots i_{n-1}}$ is contained in a rectangle of sides
	\begin{align*}
		\overbrace{\hat{a}_{1}A_{1}(y,n),...,\hat{a}_{1}A_{1}(y,n),}^{m_{1}}\overbrace{\hat{a}_{1}A_{2}(y,n),...,\hat{a}_{1}A_{2}(y,n),}^{m_{2}}...,\\
		\overbrace{\hat{a}_{1}A_{\ell}(y,n),...,\hat{a}_{1}A_{\ell}(y,n)}^{m_{\ell}},
	\end{align*}
	where $A_{i}(y,n)=\prod_{j=0}^{n-1}m(D_{f^{j}(y)}f|_{E_{i}})^{-1}$ for each $i\in \{1,2,...,\ell\}$.
	Notice that there is $i\in \{0,1,...,\ell-1\}$ such that $r^{\prime}_{i}\leq t_{1}<r^{\prime}_{i+1}$. Fix $r>0$ small enough and set
	\begin{align*}
		\hat{\mathcal{Q}}=\{\mathbf{i}=(\cdots\bar{i}_{-m}\cdots\bar{i}_{0}i_{1}\cdots i_{n-1}):\hat{a}_{1}A_{\ell-i}(y,n)\leq r\ \text{for}\ \text{all}\ y \in W^{u}_{\text{loc}}(x,f)\cap P_{\bar{i}_{0}i_{1}\cdots i_{n-1}};\\
		\text{but}~\hat{a}_{1}A_{\ell-i}(z,n-1)>r~\text{for}~\text{some}~z \in W^{u}_{\text{loc}}(x,f)\cap P_{\bar{i}_{0}i_{1}\cdots i_{n-1}}\}.
	\end{align*}
	Therefore, for each $\mathbf{i}=(\cdots\bar{i}_{-m}\cdots\bar{i}_{0}i_{1}\cdots i_{n-1})\in \hat{\mathcal{Q}}$, we have
	\begin{align*}
		\hat{b}r<\hat{a}_{1}A_{\ell-i}(y,n)\leq r ~\text{for} ~\text{all}~y\in W^{u}_{\text{loc}}(x,f)\cap P_{\bar{i}_{0}i_{1}\cdots i_{n-1}},
	\end{align*}
	where $\hat{b}=\hat{C}_{0}^{-1}\min_{x\in \Lambda}m(D_{x}f)^{-1}$.
	
	Let $B\subset W^{u}_{\text{loc}}(x,f)$ be a ball of radius $r$ and $\tilde{B}\subset W^{u}_{\text{loc}}(x,f)$ a ball of radius $\hat{C}r$ with the same center as that of $B$, where $\hat{C}$ is a constant only depended on the dimension of the unstable manifold. Put
	\begin{align*}
		\hat{\mathcal{Q}}_{1}=\{\mathbf{i}\in \hat{\mathcal{Q}}|P_{\mathbf{i}}\cap B\neq \varnothing\}.
	\end{align*}
	Hence, for each $\mathbf{i}\in\hat{\mathcal{Q}}_{1}$, we have $\tilde{P}_{\mathbf{i}}\cap \tilde{B}$ is contained in a rectangle of sides
	\begin{align*}
		\overbrace{\hat{a}_{1}A_{\ell-i}(y,n),...,\hat{a}_{1}A_{\ell-i}(y,n)}^{u-r^{\prime}_{i}}, \overbrace{\hat{a}_{1}A_{\ell-i+1}(y,n),...,\hat{a}_{1}A_{\ell-i+1}(y,n)}^{m_{\ell-i+1}},..., \\\overbrace{\hat{a}_{1}A_{\ell}(y,n),...,\hat{a}_{1}A_{\ell}(y,n)}^{m_{\ell}}.\\
	\end{align*}
	It follows that
	\begin{align*}
		\hat{a}_{1}^{u}A_{\ell-i}(y,n)^{u-r^{\prime}_{i}}A_{\ell-i+1}(y,n)^{m_{\ell-i+1}}...A_{\ell}(y,n)^{m_{\ell}}\geq vol (\tilde{P}_{\mathbf{i}}\cap \tilde{B}).
	\end{align*}
	Since
	\begin{align*}
		A_{\ell-i}(y,n)^{u-r^{\prime}_{i}}&=A_{\ell-i}(y,n)^{t_{1}-r^{\prime}_{i}}A_{\ell-i}(y,n)^{u-t_{1}}\\
		&\leq A_{\ell-i}(y,n)^{t_{1}-r^{\prime}_{i}}(\frac{r}{\hat{a}_{1}})^{u-t_{1}},
	\end{align*}
	then
	\begin{align*}
		\hat{a}_{2}r^{u-t_{1}}A_{\ell-i}(y,n)^{t_{1}-r^{\prime}_{i}}A_{\ell-i+1}(y,n)^{m_{\ell-i+1}}...A_{\ell}(y,n)^{m_{\ell}}\geq vol (\tilde{P}_{\mathbf{i}}\cap \tilde{B})
	\end{align*}
	for some constant $\hat{a}_{2}>0$, here $vol$ denotes the volume induced on the local unstable leaf. Therefore,
	\begin{align*}
		\sum_{\mathbf{i}\in\hat{\mathcal{Q}}_{1}}	\hat{a}_{2}r^{u-t_{1}}A_{\ell-i}(y,n)^{t_{1}-r^{\prime}_{i}}A_{\ell-i+1}(y,n)^{m_{\ell-i+1}}...A_{\ell}(y,n)^{m_{\ell}}\geq vol (\tilde{B})\geq \hat{C}^{u}\hat{a}_{3}r^{u}
	\end{align*}
	for some constant $\hat{a}_{3}>0$. Hence,
	\begin{align*}
		\sum_{\mathbf{i}\in\hat{\mathcal{Q}}_{1}}	A_{\ell-i}(y,n)^{t_{1}-r^{\prime}_{i}}A_{\ell-i+1}(y,n)^{m_{\ell-i+1}}...A_{\ell}(y,n)^{m_{\ell}}\geq \hat{a}_{4}r^{t_{1}}
	\end{align*}
	for some constant $\hat{a}_{4}>0$.
	On the other hand, we have
	\begin{align*}
		\mu_{x}^{u}(\tilde{B})&\geq \sum_{(\cdots\bar{i}_{-m}\cdots\bar{i}_{0}i_{1}\cdots i_{n-1})\in\hat{\mathcal{Q}}_{1}}\mu_{x}^{u}(W^{u}_{\text{loc}}(x,f)\cap P_{\bar{i}_{0}i_{1}\cdots i_{n-1}})\\
		&\geq \hat{K} \sum_{(\cdots\bar{i}_{-m}\cdots\bar{i}_{0}i_{1}\cdots i_{n-1})\in\hat{\mathcal{Q}}_{1}} \exp(-\sum_{i=0}^{n-1}\hat{\psi}^{t_{1}}(f^{i}(y),f))\\
		&=\hat{K} \sum_{(\cdots\bar{i}_{-m}\cdots\bar{i}_{0}i_{1}\cdots i_{n-1})\in\hat{\mathcal{Q}}_{1}} A_{\ell-i}(y,n)^{t_{1}-r^{\prime}_{i}}A_{\ell-i+1}(y,n)^{m_{\ell-i+1}}...A_{\ell}(y,n)^{m_{\ell}}\\
		&\geq \hat{a}_{5}r^{t_{1}}\\
        &= \hat{a}_{5}'(\hat{C} r)^{t_{1}}
	\end{align*}
	for some constant $\hat{a}_{5}>0$ and $\hat{a}'_5=\frac{\hat{a}_5}{(\hat{C})^{t_1}}$. For the first inequality, notice that it doesn't matter the Bowen balls in the expression of the Gibbs property are closed or open. Besides, the interior of the elements in the Markov partition are pairwise disjoint. Thus we can deal with the sets in the sum carefully such that the first inequality holds and the $u$-Gibbs property holds. The second inequality is due to the $u$-Gibbs property for $-\hat{\psi}^{t_{1}}(\cdot,f)$ of $\mu_{x}^{u}$. By Proposition~\ref{dimension of measure_upper}, we have $\text{dim}_{H}(\Lambda\cap W^{u}_{\text{loc}}(x,f))\leq t_{1}$.
\end{proof}

\subsection{Proof of Theorem~\ref{Main B}}

Let $\Gamma$ be the set of points which are regular in the sense of Oseledec \cite{Oseledets} with respect to the measure $\mu$. For  every $x\in\Gamma$, denote its distinct Lyapunov exponents by
\begin{align*}
	\lambda_{1}(\mu)>...>\lambda_{\ell}(\mu)>0>\lambda_{\ell+1}(\mu)
\end{align*}
with multiplicities $m_{1}, m_{2},...,m_{\ell+1}\geq 1$	and let
\begin{align*}
	T_{x}M=E_{1}(x)\oplus E_{2}(x)\oplus...\oplus E_{\ell+1}(x)
\end{align*}
be the corresponding decomposition of its tangent space, where $0<\ell+1\leq d_{0}$. Note that $\dim E_{\ell+1}=1$. Denote $r_0=0$ and $r_j=m_1+m_2+...+m_j$ for $j=1,2,...,\ell+1$.
Set $u=m_1+m_2+...+m_\ell$. For $t\in[0,u]$, $n\geq 1$ and $x\in\Gamma$, define
\begin{equation*}
\psi^t(x,f^n)=\sum_{j=1}^dm_j\log\|D_xf^n|_{E_j}\|+(t-r_d)\log\|D_xf^n|_{E_{d+1}}\|
\end{equation*}
if $r_d\leq t<r_{d+1}$ for some $d\in\{0,1,...,\ell-1\}$. Here we set $\psi^0(x,f^n)=0$ and $\psi^u(x,f^n)=\sum_{j=1}^\ell m_j\log\|D_xf^n|_{E_j}\|$. It is clear that
\begin{equation*}
\Psi_f(t)\triangleq\{-\psi^t(\cdot, f^n)\}_{n\geq1}
\end{equation*}
is super-additive. For $t'\in[0,1]$, $n\geq1$ and $x\in\Gamma$, define
\begin{equation*}
\phi^{t'}(x,f^n)=t'\log\|D_xf^n|_{E_{\ell+1}}\|.
\end{equation*}
Since $\dim E_{\ell+1}=1$, then
\begin{equation*}
\Phi_f(t')\triangleq\{\phi^{t'}(\cdot, f^n)\}_{n\geq1}
\end{equation*}
is additive. Let $\lambda_j'(\mu)=\lambda_{d+1}(\mu)$ if $r_d\leq j<r_{d+1}$ for $d=\{0,1,...,\ell\}$, and $\lambda'_{d_0}(\mu)=\lambda_{\ell+1}(\mu)$.

Fix any sufficiently small $\varepsilon\in(0,\frac{\lambda_\ell(\mu)}{2(3u+1)})$.
Since $\mu$ is a hyperbolic SRB measure, by Theorem \ref{Katok}, then there exists a hyperbolic set $\Lambda_\varepsilon$ satisfying
\begin{itemize}
  \item [ (a) ] $\Lambda_\varepsilon$ is contained in the $\varepsilon$-neighborhood of the support of $\mu$;
  \item [ (b) ] $|h_{\text{top}}(f|_{\Lambda_\varepsilon}) - h_\mu(f)|\leq\varepsilon$;
  \item [ (c) ] $d(\nu,\mu)<\varepsilon$ for every $f$-invariant Borel probability measure $\nu$ on $\Lambda_\varepsilon$;
  \item [ (d) ] $|\lambda_j'(\mu)-\lambda_j(\nu)|\leq\varepsilon$ for every ergodic $f$-invariant Borel probability measure $\nu$ supported on $\Lambda_\varepsilon$ and $j=1,2,...,d_0$;
  \item [ (e) ] $\Lambda_\varepsilon$ admits a $\{\lambda_j(\mu)\}$-dominated splitting $T_{\Lambda_\varepsilon}M=E_1\oplus E_2\oplus\cdots\oplus E_{\ell+1}$. And denote $E^u=E_1\oplus E_2\oplus\cdots\oplus E_\ell$ and $E^s=E_{\ell+1}$.
\end{itemize}
Fixed any integer $n\geq1$. Let $t_n$ be the root of Bowen's equation $P(f^{2^{n}}|_{\Lambda_\varepsilon},-\psi^t(\cdot,f^{2^n}))=0$, and $\mu_n^u$ be the unique equilibrium state for $P(f^{2^{n}}|_{\Lambda_\varepsilon},-\psi^{t_n}(\cdot,f^{2^n}))$. Proposition \ref{u-Gibbs} tells us that the family of conditional measures $\{\mu_{n,x}^u\}_{x\in\Lambda_\varepsilon}$ of $\mu^u_n$ on the local unstable leaves have the $u$-Gibbs property for $-\psi^{t_n}(\cdot,f^{2^n})$. As in the proof of Lemma \ref{lower bound 1} we obtain
\begin{equation}\label{pdofu}
t_n\leq\liminf_{r\to0}\frac{\log \mu^u_{n,x}(B^u(x,r))}{\log r}\leq\limsup_{r\to0}\frac{\log \mu^u_{n,x}(B^u(x,r))}{\log r}\leq u
\end{equation}
for every $x\in\Lambda_\varepsilon$, where $B^u(x,r)=\{y\in W^u_{\text{loc}}(x,f): d_u(x, y)<r\}$.
(Here $d_u$ is the metric induced by the Riemannian structure on the unstable manifold $W^u$ and $d_s$ is the metric induced by the Riemannian structure on the stable manifold $W^s$.)
Let $t_n'$ be the root of Bowen's equation
$$P(f^{2^{n}}|_{\Lambda_\varepsilon},\phi^t(\cdot,f^{2^n}))=0,$$
and $\mu_n^s$ be the unique equilibrium state for $P(f^{2^{n}}|_{\Lambda_\varepsilon},\phi^{t_n'}(\cdot,f^{2^n}))$.
Then Lemma \ref{lower bound 1} and Lemma \ref{upper bound 1} tell us that for every $x\in\Lambda_\varepsilon$,
\begin{equation}\label{pdofs}
\lim_{r\to0}\frac{\log\mu^s_{n,x}(B^s(x,r))}{\log r}=t_n',
\end{equation}
where $\{\mu_{n,x}^s\}_{x\in\Lambda_\varepsilon}$ is the family of conditional measures of of $\mu^s_n$ on the local stable leaves and $B^s(x,r)=\{y\in W^s_{\text{loc}}(x,f): d_s(x, y)<r\}$.

Define a measure $\hat{\mu}_n$ on $\Lambda_\varepsilon$ satisfying
\begin{equation*}
\hat{\mu}_n(B(x,r))=\mu^u_{n,x}(B^u(x,r))\times \mu^s_{n,x}(B^s(x,r))
\end{equation*}
for every $x\in\Lambda_\varepsilon$ and each $r$  much smaller than the size of the local unstable and stable leaf.
Fixed any $x\in\Lambda_\varepsilon$, for the above $\varepsilon>0$, (\ref{pdofu}) and (\ref{pdofs}) tell us that there is small $r_0>0$ such that for every $r\in(0, r_0)$,
\begin{equation}\label{star5}
r^{u+\varepsilon}\leq \mu_{n,x}^u(B^u(x,r))\leq r^{t_n-\varepsilon} \text{ and } r^{t_n'+\varepsilon}\leq \mu_{n,x}^s(B^s(x,r))\leq r^{t_n'-\varepsilon}.
\end{equation}
Note that there exist numbers $\gamma_1>1$ and $0<\gamma_2<1$ such that for any $x\in\Lambda_\varepsilon$, and sufficiently small $r>0$
\begin{equation}\label{star6}
B^u(x, \gamma_2r)\times B^s(x,\gamma_2r)\subseteq B(x,r) \subseteq B^u(x, \gamma_1r)\times B^s(x,\gamma_1r).
\end{equation}
Therefore, by (\ref{star5}) and (\ref{star6}) one has
\begin{eqnarray*}
\begin{aligned}
\hat{\mu}_n(B(x,\frac1\gamma_1r))\geq& \mu^u_{n,x}(B^u(x,\frac{\gamma_2}{\gamma_1}r))\cdot\mu^s_{n,x}(B^s(x,\frac{\gamma_2}{\gamma_1}r))\\
\geq& \left(\frac{\gamma_2}{\gamma_1}r\right)^{u+\varepsilon} \cdot \left(\frac{\gamma_2}{\gamma_1}r\right)^{t_n'+\varepsilon}
\end{aligned}
\end{eqnarray*}
and
\begin{eqnarray*}
\begin{aligned}
\hat{\mu}_n(B(x,\frac1\gamma_1r))\leq& \mu^u_{n,x}(B^u(x,r))\cdot\mu^s_{n,x}(B^s(x,r))\\
\leq& r^{t_n+t_n'-2\varepsilon}.
\end{aligned}
\end{eqnarray*}
This yields that
\begin{equation*}
t_n+t_n'-2\varepsilon\leq\underline{d}_{\hat{\mu}_n}(x)\leq\overline{d}_{\hat{\mu}_n}(x)\leq u+t_n'+2\varepsilon
\end{equation*}
for every $x\in\Lambda_\varepsilon$.
Combining with Proposition \ref{ehdms}, we obtain
\begin{equation}\label{star7}
t_n+t_n'-2\varepsilon\leq\dim_H\Lambda_\varepsilon\leq u+t_n'+2\varepsilon
\end{equation}
for every integer $n\geq 1$.

As in the proof of Lemma \ref{lower bound 2}, $t_\varepsilon:=\lim_{n\to\infty}t_n$ is the root of $P_{var}(f|_{\Lambda_\varepsilon}, \Psi_f(t))=0$. Combining with (\ref{lowerbofroot}) we have $\lim_{\varepsilon\to0}t_\varepsilon=u$. Therefore
\begin{equation}\label{star8}
\lim_{\varepsilon\to0}\lim_{n\to\infty}t_n=u.
\end{equation}
Since $\Phi_f(t')$ is additive, which is also super-additive, then we also obtain $t_\varepsilon'=\lim_{n\to\infty}t_n'$ is the root of $P_{var}(f|_{\Lambda_\varepsilon}, \Phi_f(t'))=0$.
It follows from Theorem \ref{variational principle}, (b) and (d) that
\begin{eqnarray*}
\begin{aligned}\frac{h_\mu(f)+\varepsilon}{-\lambda_{\ell+1}(\mu)-\varepsilon}\geq &     \,\,\,t_\varepsilon'\\
=&\sup\{\frac{h_\nu(f)}{-\lambda_{\ell+1}(\nu)}|\ \nu \text{ is ergodic } f\text{-invariant Borel probability measure}\\
&\ \ \ \ \ \ \ \ \ \ \ \ \ \ \ \ \ \ \ \ \ \text{ supported on } \Lambda_\varepsilon\}\\
\geq& \frac{h_\mu(f)-\varepsilon}{-\lambda_{\ell+1}(\mu)+\varepsilon}.
\end{aligned}
\end{eqnarray*}
Hence
\begin{equation}\label{star9}
\lim_{\varepsilon\to0}\lim_{n\to\infty}t_n'=\frac{h_\mu(f)}{-\lambda_{\ell+1}(\mu)}.
\end{equation}
Then (\ref{star7}), (\ref{star8}) and (\ref{star9}) tell us that
\begin{equation}\label{limofHD}
\lim_{\varepsilon\to0}\dim_H\Lambda_\varepsilon=u+\frac{h_\mu(f)}{-\lambda_{\ell+1}(\mu)}.
\end{equation}

Since $\dim E_{\ell+1}=1$ and $\mu$ is ergodic, Ledrappier and Young \cite{LY} proved that there is a constant $\dim_H^s\mu$ such that the pointwise dimension of $\mu$ on $W^s$-manifolds
\begin{equation*}
\lim_{r\to0}\frac{\log\mu_x^s(B^s(x,r))}{\log r}=\dim_H^s\mu
\end{equation*}
for $\mu$ almost every $x$, and here $\{\mu_x^s\}$ is a system of the conditional measures associated with $\xi^s$, which is a measurable partition of $M$ subordinate to the stable manifolds $W^s$. In fact they also proved that
\begin{equation}\label{star10}
\dim_H^s\mu=\frac{h_\mu(f)}{-\lambda_{\ell+1}(\mu)}.
\end{equation}

\begin{Lem}\label{u}
	Suppose $\mu$ is a hyperbolic ergodic SRB measure for a $C^{1+\alpha}$ diffeomorphism $f$, then for $\mu$ almost every point $x$,
\begin{equation*}
\lim_{r\to0}\frac{\log\mu_x^u(B^u(x,r))}{\log r}=u,
\end{equation*}
where $u$ is the dimension of the unstable subspace, $\{\mu_x^u\}$ is a system of the conditional measures associated with $\xi^u$, which is a measurable partition of $M$ subordinate to the unstable manifolds $W^u$.
\end{Lem}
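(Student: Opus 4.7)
The plan is to exploit the defining property of an SRB measure, namely that the conditional measure $\mu_x^u$ on the element $\xi^u(x)$ of the partition subordinate to unstable manifolds is absolutely continuous with respect to the Riemannian volume induced on $W^u(x,f)$. Denote this induced volume by $\mathrm{Leb}_x^u$ and let $\rho_x^u = d\mu_x^u/d\mathrm{Leb}_x^u$ be the density. The goal becomes a soft statement: the pointwise dimension of a measure with an $L^1$ density on a smooth $u$-dimensional manifold equals $u$ at almost every point where the density is positive.

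First, I would record the geometric fact that $W^u_{\mathrm{loc}}(x,f)$ is a $C^1$ embedded disk of dimension $u$, so that for every $y \in W^u_{\mathrm{loc}}(x,f)$ one has $\mathrm{Leb}_x^u(B^u(y,r)) = c(y)\,r^u(1+o(1))$ as $r\to 0$ with $c(y)>0$; consequently
\[
\lim_{r\to 0}\frac{\log \mathrm{Leb}_x^u(B^u(y,r))}{\log r}=u.
\]
Next, by the Lebesgue differentiation theorem applied to the locally integrable density $\rho_x^u$ on the $C^1$ manifold $W^u_{\mathrm{loc}}(x,f)$, for $\mathrm{Leb}_x^u$-a.e.\ $y$ (hence for $\mu_x^u$-a.e.\ $y$) we have
\[
\lim_{r\to 0}\frac{\mu_x^u(B^u(y,r))}{\mathrm{Leb}_x^u(B^u(y,r))}=\rho_x^u(y).
\]
Combining these two limits gives $\lim_{r\to 0}\log\mu_x^u(B^u(y,r))/\log r = u$ at every $y$ where $\rho_x^u(y)\in(0,\infty)$ and the Lebesgue differentiation limit holds.

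Finally, to land the statement at the base point $x$ itself for $\mu$-a.e.\ $x$, I would run a standard Fubini/disintegration argument. Set
\[
B=\{x\in M:\rho_x^u(x)=0 \text{ or the Lebesgue limit above fails at }x\}.
\]
Since $\xi^u$ is a partition, for any $x,y$ in the same partition element one has $\xi^u(x)=\xi^u(y)$ and $\mu_x^u=\mu_y^u$, so restricted to $\xi^u(y)$ the set $B$ coincides with $\{z\in\xi^u(y):\rho_y^u(z)=0\text{ or the limit fails at }z\}$, which has $\mu_y^u$-measure zero by the two facts above. The disintegration $\mu=\int\mu_y^u\,d\mu(y)$ then gives $\mu(B)=0$. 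On $M\setminus B$ the previous paragraph yields exactly the desired limit.

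The only minor obstacle is this last Fubini step, and it is minor precisely because the constancy of $\xi^u$-leaves along the partition makes $\rho_x^u(x)$ a well-defined measurable function of $x$ whose vanishing set meets each leaf in a $\mu_y^u$-null set. No uniformity is required, because the lemma asks only for a pointwise $\mu$-a.e.\ statement; in particular we do not need continuity of the density, which would fail in general.
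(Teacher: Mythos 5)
Your proof is correct, and it takes a genuinely different route from the paper's. The paper argues at the level of Hausdorff dimension of sets: absolute continuity of $\mu_x^u$ gives that any set of full $\mu_x^u$-measure in $W^u_{\text{loc}}(x,f)$ has positive Lebesgue measure, hence Hausdorff dimension $u$, so $\dim_H\mu_x^u=u$; it then concludes via the identity $\dim_H\nu=\operatorname{ess\,sup}\underline{d}_\nu$, implicitly relying on Ledrappier--Young's theorem (quoted in the introduction) that the pointwise limit $d^u(x)$ exists and is constant $\mu$-a.e. You instead work directly with the density $\rho_x^u=d\mu_x^u/d\mathrm{Leb}_x^u$ and the Lebesgue differentiation theorem on the $C^1$ leaf, establishing both the \emph{existence} of the limit and its value $u$ in one stroke, followed by a disintegration/Fubini step to pass from a.e.\ point on a.e.\ leaf to $\mu$-a.e.\ base point. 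Your version is more self-contained: it does not need Ledrappier--Young's constancy result, and it avoids the subtle point that the ess-sup identity alone gives only $\operatorname{ess\,sup}\underline{d}_{\mu_x^u}=u$, not the a.e.\ existence of the full limit (a gap the paper papers over by tacitly invoking Ledrappier--Young). The paper's proof buys brevity; yours buys transparency and rigor at the cost of a slightly longer write-up.
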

\begin{proof}
  Fix any point $x$ such that $\mu_x^u$ is well defined. For any measurable set $B\subset W^{u}_{\text{loc}}(x,f)$ with $\mu^{u}_{x}(B)=1$, since $\mu$ is an SRB measure, then $Leb^{u}(B)>0$, where $Leb^{u}$ is the Lebesgue measure induced on the local unstable leaf.  It follows from the definition of the Hausdorff dimension that $\dim_{H}B=u.$ By the definition of the Hausdorff dimension of measures, we get $\dim_H\mu_x^u=u$. Then by (\ref{adofhdm}) one has $\lim_{r\to0}\frac{\log\mu_x^u(B^u(x,r))}{\log r}=u$ for $\mu$ almost every $x$.
\end{proof}

We also denote $\lim_{r\to0}\frac{\log\mu_x^u(B^u(x,r))}{\log r}$ by $\dim_H^u\mu$ for $\mu$ almost every $x$.
Barreira, Pesin and Schmeling \cite{BPS} proved $\dim_H\mu=\dim_H^u\mu+\dim_H^s\mu$. Combining with (\ref{limofHD}), (\ref{star10}) and Lemma \ref{u} one has
\begin{equation*}
\lim_{\varepsilon\to0}\dim_H\Lambda_\varepsilon=\dim_H\mu.
\end{equation*}
This completes the proof of Theorem \ref{Main B}.


\begin{thebibliography}{Qu}
\bibitem[ACW]{ACW}A. Avila, S. Crovisier and A. Wilkinson, $C^{1}$ density of stable ergodicity,  {\it Adv. Math.} 379:107496, 2021.
%\bibitem[B]{B}R. Bowen, {\it Equilibrium States and the Ergodic Theory of Anosov Diffeomorphisms},  Lect. Notes in Math. 470, Springer, Berlin Heidelberg-New York,  1975.

\bibitem[BPS]{BPS}L. Barreira, Y. Pesin and J. Schmeling, Dimension and product structure of hyperbolic measures, {\it Ann. of Math. (2)}, 149: no.3, 755-783, 1999.
%\bibitem[BRS]{BRS}B. B$\acute{\text{a}}$r$\acute{\text{a}}$ny, M. Rams and K. Simon, On the dimension of self-affine sets and measures with overlaps, {\it Proc. Amer. Math. Soc.}, 144: no.10, 4427-4440,  2016.
\bibitem[BW]{BW2006} L. Barreira and C.Wolf, Pointwise dimension and ergodic decompositions, \emph{Ergod. Th. \& Dynam. Sys.}, 26: 653-671, 2006.
\bibitem[BV]{BV}L. Barreira and C. Valls, Multifractal structure of two-dimensional horseshoes, {\it Comm. Math. Phys.}, 266: no.2, 455-470, 2006.
%\bibitem[BP]{BP} L. Barreira and Y. Pesin. Nonuniform hyperbolicity, Encyclopedia of Mathematics
%and its Applications. vol. 115, Cambridge University Press, Cambridge, 2007.
\bibitem[CFH]{cfh}Y. Cao, D. Feng and W. Huang, The thermodynamic formalism for sub-additive potentials, \emph{Discrete Contin. Dyn. Syst.}, 20: 639-657, 2008.
%\bibitem[CFH] {CFH2008} Y. Cao, D. Feng and W. Huang, The thermodynamic formalism for sub-additive potentials, Discrete Contin. Dyn. Syst., 20: 639-657, 2008.
%\bibitem[CG1]{CG1}S. Campos and K. Gelfert, Exceptional sets for nonuniformly expanding maps, \emph{Nonlinearity}, 29:  1238-1256, 2016.
%\bibitem[CG2]{CG2}S. Campos and K. Gelfert, Exceptional sets for nonuniformly hyperbolic diffeomorphisms, \emph{J. Dynam. Differential Equations}, 31: no.2, 979-1004, 2019.

\bibitem[CPZ]{caopesinzhao2019} Y. Cao, Y. Pesin and Y. Zhao, Dimension estimates for non-conformal repellers and continuity of sub-additive topological pressure, \emph{Geom. Funct. Anal.}, 29: 1325-1368, 2019.
\bibitem[CliPZ]{CliPZ} V. Climenhaga, Y. Pesin and A. Zelerowicz, Equilibrium states in dynamical systems via geometric measure theory, \emph{Bull.
Amer. Math. Soc.} (N.S.) 56: no.4, 569-610, 2019.

%\bibitem[G1]{gelfert2010}K. Gelfert. Repellers for non-uniformly expanding maps with singular or critical points. Bull.
%Braz. Math. Soc., 41: 237-257, 2010.
\bibitem[K]{katok1980} A. Katok, Lyapunov exponents, entropy and periodic orbits for diffeomorphisms, {\it Publications Math$\acute{\text{e}}$matiques De Linstitut Des Hautes $\acute{\text{E}}$tudes Scientifiques}, 51: 137-173, 1980.
\bibitem[KH]{KH}A. Katok and B. Hasselblatt, {\it Introduction to the Modern Theory of Dynamical Systems. Encyclopedia of Mathematics and Its Applications}, 54, Cambridge University Press, Cambridge, 1995.
\bibitem[LS]{LS}F. Ledrappier and J. Strelcyn, A proof of the estimation from below in Pesin's entropy formula, \emph{Ergod. Th. \& Dynam. Sys.}, 2: 203-219, 1983.
\bibitem[LY]{LY}F. Ledrappier and L. S. Young, The metric entropy of diffeomorphisms. II. Relations between entropy, exponents and dimension, {\it Ann. of Math. (2)}, 122:  no.3, 540-574, 1985.
\bibitem[MM]{MM1983}H. McCluskey and A. Manning, Hausdorff dimension for horseshoes, \emph{Ergod. Th. \& Dynam. Sys.}, 3: 25-260, 1983.
\bibitem[M]{Mendoza1988}L. Mendoza, Ergodic attractors for diffeomorphisms of surfaces, \emph{J. Lond. Math. Soc.}, 37:  362-374, 1988.
\bibitem[O]{Oseledets}V. Oseledets, A multiplicative ergodic theorem, {\it Trans. Moscow. Math. Soc.}, 19:  197-231, 1968.

\bibitem[P]{Pesin}Y. Pesin, {\it Dimension Theory in Dynamical Systems. Contemporary Views and Applications}, Chicago Lectures in Mathematics, University of Chicago Press, Chicago, IL, 1997.
%\bibitem[PS]{PS}Y. Peres and P. Shmerkin, Resonance between Cantor sets, {\it Ergod. Th.  \&  Dynam. Sys.}, 29: no.1, 201-221, 2009.
%\bibitem[QC]{QC}C. Qu and Y. Cao, Dimensional approximation for non-uniformly expanding maps with singular or critical points, preprint.
\bibitem[S]{S}F. J. S$\acute{\text{a}}$nchez-Salas, Ergodic attractors as limits of hyperbolic horseshoes, \emph{Ergod. Th. \& Dynam. Sys.}, 22: 571-589, 2002.
\bibitem[S2]{S2}F. J. S$\acute{\text{a}}$nchez-Salas, Dimension of Markov towers for non uniformly expanding one-dimensional systems, \emph{Discrete Contin. Dyn. Syst.}, 9:  no.6, 1447-1464, 2003.
\bibitem[W]{WALTERS} P. Walters, {\it An Introduction to Ergodic Theory}, Springer-Verlag, New York, 1982.
\bibitem[Y]{Y}L. S. Young, Dimension, entropy and Lyapunov exponents, {\it Ergod. Th. \& Dynam. Sys.},  2: 109-124, 1982.



%\bibitem{Prz1}F. Przytycki, Expanding repellers in limit sets for iterations of holomorphic functions, \emph{Fund. Math.}, (2005), 186,  85C96.



\end{thebibliography}
\end{document}